\documentclass[final]{siamltex1213}

\usepackage{amsmath,amstext,amsbsy,amssymb,subfig}
\usepackage{overpic}
\usepackage{mathdots}
\usepackage{enumitem}
\usepackage{tikz} 
\usepackage{relsize}
\usetikzlibrary{shapes.misc}

\usepackage{subdepth}
\usepackage{booktabs,bm}
\usepackage{mathrsfs}
\usepackage[fleqn,tbtags]{mathtools}

\tikzset{cross/.style={cross out, draw=black, minimum size = 2*(#1-\pgflinewidth), inner sep=0pt, outer sep=0pt}, 
cross/.default={1pt}}

\title{A nonuniform fast Fourier transform based on low rank approximation}
\author{Diego Ruiz--Antol\'{i}n\thanks{Departamento de Matem\'{a}ticas, Estad\'{i}stica y Computaci\'{o}n
Universidad de Cantabria, Av. de los Castros 48 E-39005 Santander, Spain. (\texttt{diego.ruizantolin@unican.es})} \and Alex Townsend\thanks{Department of Mathematics, Cornell University, Ithaca, NY 14853. (\texttt{townsend@cornell.edu}) This work is supported by National Science Foundation grant No.~1522577.}}
\date{\today}
\begin{document}
\maketitle

\begin{abstract}
By viewing the nonuniform discrete Fourier transform (NUDFT) as a perturbed version of a uniform discrete Fourier transform, we propose a fast, stable, and simple algorithm for computing the NUDFT that costs $\mathcal{O}(N\log N\log(1/\epsilon)/\log\!\log(1/\epsilon))$ operations based on the fast Fourier transform, where $N$ is the size of the transform and $0<\epsilon <1$ is a working precision.  Our key observation is that a NUDFT and DFT matrix divided entry-by-entry is often well-approximated by a low rank matrix, allowing us to express a NUDFT matrix as a sum of diagonally-scaled DFT matrices.  Our algorithm is simple to implement, automatically adapts to any working precision, and is competitive with state-of-the-art algorithms.  In the fully uniform case, our algorithm is essentially the FFT.  We also describe quasi-optimal algorithms for the inverse NUDFT and two-dimensional NUDFTs. 
\end{abstract} 

%

\section{Introduction}
The nonuniform discrete Fourier transform (NUDFT) is an important task in computational mathematics that appears in signal processing~\cite{Bagchi_12_01}, the numerical 
solution of partial differential equations~\cite{Lee_05_01}, and in magnetic resonance imaging~\cite{Fessler_03_01}.  Quasi-optimal algorithms for computing the NUDFT are referred to as nonuniform fast Fourier transforms (NUFFTs), 
and state-of-the-art NUFFTs are usually based on oversampling, discrete convolutions, and the fast Fourier transform (FFT) on an oversampled grid~\cite{Dutt_93_01,Greengard_04_01,Potts_01_01,Ware_98_01}. In this paper, we propose a NUFFT that is embarrassingly parallelizable. It is numerically stable without the need for oversampling, and costs $K$ FFTs, where $K$ is a carefully selected integer.  Our central idea is to exploit a low rank observation (see~\eqref{eq:Observation}).

Let $N\geq 1$ be an integer and $\underline{c} = \left(c_0,\ldots, c_{N-1}\right)^\intercal$ be an $N\times 1$ vector with complex entries. The one-dimensional NUDFT computes the 
vector $\underline{f} = \left(f_0,\ldots,f_{N-1}\right)^\intercal$, defined by the following sums:
\begin{equation}
f_j  = \sum_{k=0}^{N-1} c_k e^{-2\pi i x_j \omega_k}, \qquad 0\leq j\leq N-1,
\label{eq:NUDFT}
\end{equation} 
where $x_0,\ldots, x_{N-1}\in[0,1]$ are {\em samples} and $\omega_0,\ldots,\omega_{N-1}\in[0, N]$ are {\em frequencies}. 
Since~\eqref{eq:NUDFT} involves $N$ sums with each sum containing $N$ terms, computing the vector $\underline{f}$ naively costs $\mathcal{O}(N^2)$ operations. If the samples are equispaced, i.e., $x_j = j/N$, and the frequencies are integer, i.e., $\omega_k = k$, then the transform is fully uniform and~\eqref{eq:NUDFT} can be computed by the FFT in $\mathcal{O}(N\log N)$ operations by exploiting algebraic redundancies~\cite{Cooley_65_01}. 
Unfortunately, these algebraic redundancies are ``brittle''~\cite{Beatson_97_01} and the ideas behind the FFT are not immediately useful when either the samples are nonequispaced or the frequencies are noninteger. To develop a NUFFT, one has to exploit a nonzero working precision of $0<\epsilon<1$ and make careful approximations.   


There are three types of NUDFTs~\cite{Greengard_04_01}: 
\begin{itemize}[leftmargin=*]
\item NUDFT-I (Uniform samples and noninteger frequencies): In~\eqref{eq:NUDFT} the samples are equispaced, i.e., $x_j = j/N$, and the frequencies $\omega_0,\ldots,\omega_{N-1}$ are noninteger. This corresponds to evaluating a generalized Fourier series at equispaced points. In Section~\ref{sec:NUFFT-I}, we describe a quasi-optimal algorithm for computing the NUDFT-I referred to as a NUFFT-I. 

\item NUDFT-II (Nonuniform samples and integer frequencies): In~\eqref{eq:NUDFT} the frequencies are integers and the samples $x_0,\ldots, x_{N-1}$ are nonequispaced points in $[0,1]$. This NUDFT corresponds to evaluating a Fourier series at nonequispaced points. In Section~\ref{sec:TypeTwo}, we describe an $\mathcal{O}(N\log N\log(1/\epsilon)/\log\!\log(1/\epsilon))$
algorithm, referred to hereafter as the NUFFT-II, for computing the NUDFT-II with a working precision of~$\epsilon$. Note that this transform also goes by the acronym NFFT~\cite{Potts_03_01}. 

\item NUDFT-III (Nonuniform samples and nonuniform frequencies): In~\eqref{eq:NUDFT} the samples
$x_0,\ldots, x_{N-1}$ are nonequispaced and the frequencies $\omega_0,\ldots,\omega_{N-1}$ are noninteger. This is the fully nonuniform transform and corresponds to evaluating a generalized Fourier series at nonequispaced points.  The NUDFT-III and its applications in image processing and the numerical solution of partial 
differential equations are discussed in~\cite{Lee_05_01}. In Section~\ref{sec:NUFFT-III}, we derive an $\mathcal{O}(N\log N\log(1/\epsilon)/\log\!\log(1/\epsilon))$ complexity algorithm for computing the NUDFT-III by combining our NUFFT-I and NUFFT-II.  We refer to this as a NUFFT-III~\cite{Lee_05_01}, but others use the acronym NNFFT~\cite{Potts_03_01}.  

\end{itemize} 

Initially, we focus on computing the NUDFT-II. This is perhaps the easiest to think about as it corresponds to evaluating a Fourier series at nonequispaced points. 
A convenient and compact way to write the NUDFT-II in~\eqref{eq:NUDFT} is as a matrix-vector product: Given Fourier coefficients $\underline{c}\in\mathbb{C}^{N\times 1}$, compute values $\underline{f}\in\mathbb{C}^{N\times 1}$ such that 
\begin{equation}
\underline{f} = \tilde{F}_2\underline{c}, \qquad (\tilde{F}_2)_{jk} = e^{-2\pi i x_j k},\quad 0\leq j,k\leq N-1,
\label{eq:NUDFT2}
\end{equation} 
where $x_0,\ldots,x_{N-1}$ are sample points. 
Therefore, a NUFFT-II is simply a quasi-optimal complexity algorithm for computing the matrix-vector product $\tilde{F}_2\underline{c}$. In the fully uniform case when $x_j = j/N$ and $\omega_k = k$, we use the notation $\smash{F_{jk} = e^{-2\pi i jk/N}}$ for the DFT matrix and note that the FFT algorithm computes $F\underline{c}$ in $\mathcal{O}(N\log N)$ operations~\cite{Cooley_65_01}.  

Our NUFFT-II algorithm is based on the simple observation that if the samples are
near-equispaced, then $\tilde{F}_2\oslash F$ can be well-approximated by a low rank matrix.\footnote{A similar observation was made in~\cite[Sec.~4]{Kunis_14_01}, but we believe that it has not been developed into a practical algorithm. A different Hadamard product matrix decomposition was exploited in~\cite{Townsend_16_01} to derive a fast Chebyshev-to-Legendre transform.} That is, for a small integer $K$ (see Table~\ref{tab:OptimalK}), we find that 
\begin{equation}
\tilde{F}_2\oslash F \approx \underline{u}_0\underline{v}_0^\intercal + \cdots +\underline{u}_{K-1}\underline{v}_{K-1}^\intercal,\qquad \underline{u}_0,\ldots,\underline{u}_{K-1},\underline{v}_0,\ldots,\underline{v}_{K-1}\in\mathbb{C}^{N\times 1},
\label{eq:Observation}
\end{equation} 
where `$\oslash$' denotes the Hadamard division, i.e., $C = A\oslash B$ means that $C_{jk} = A_{jk}/B_{jk}$.  With~\eqref{eq:Observation} in hand, we have 
\begin{equation}
\tilde{F}_2\underline{c} \approx \left(\left(\underline{u}_0\underline{v}_0^\intercal + \cdots +\underline{u}_{K-1}\underline{v}_{K-1}^\intercal\right)\circ F\right)\underline{c} = \sum_{r=0}^{K-1} D_{\underline{u}_r} F D_{\underline{v}_r} \underline{c},
\label{eq:MainIdea}
\end{equation} 
where `$\circ$' is the Hadamard product\footnote{If $C = A\circ B$, then $C_{jk} = A_{jk}B_{jk}$.} and $D_{\underline{u}}$ is the diagonal matrix with the entries of $\underline{u}$ on the diagonal. Therefore, the NUFFT-II can be 
computed in $\mathcal{O}(KN\log N)$ operations via $K$ diagonally-scaled FFTs.  
The approximation in~\eqref{eq:MainIdea} is the main idea in this paper.  All that remains is to select the integer $K$ and compute the 
vectors $\underline{u}_0,\ldots,\underline{u}_{K-1},\underline{v}_0,\ldots,\underline{v}_{K-1}$.  The observation will lead to a NUFFT-II algorithm that 
is quasi-optimal for any set of samples and frequencies (see Section~\ref{sec:TypeTwo}) and similar observations lead to  
our NUFFT-I and NUFFT-III algorithms. 

The major computational cost of our NUFFTs is $K$ FFTs that can be performed in parallel, where $K$ is an adaptively selected integer that depends on the working precision $0<\epsilon<1$ and the 
distribution of the samples and frequencies.  This allows us to reduce the cost of our NUFFTs --- by reducing $K$ --- when the working precision 
is loosened, the samples are near-equispaced, or the frequencies are close to being integers. In particular, if any of our NUFFT codes are given equispaced samples and integer frequencies, then $K=1$, and our implementation reduces to a single FFT. By always computing the 
NUDFT via $K$ FFTs, we are able to leverage the efficient FFTW library that has an implementation of the FFT that adapts to individual computer architectures~\cite{Frigo_98_01}. Our algorithm relies on FFTs that are of the same size as the original NUFFT and we
automatically exploit the distribution of the samples and frequencies if they happen to be quasi-uniform for extra computational speed.

There are many other NUFFTs in the literature based on various ideas such as discrete convolutions and oversampling~\cite{Dutt_93_01,Greengard_04_01,Potts_01_01}, min-max interpolation~\cite{Fessler_03_01}, oversampling and interpolation~\cite{Boyd_92_01}, and a Taylor-based approach~\cite{Anderson_96_01}.  The Taylor-based approach results in an easily implementable algorithm, which is avoided in practice because it is numerically unstable~\cite[Ex.~3.10]{Kunis_06_01}. For the last two decades, discrete convolutions and oversampling have been preferred.   The transforms that we develop here are convenient and simple while being numerically stable. We benchmark our algorithms against the Julia implementation of the NFFT software~\cite{Potts_03_01} to demonstrate that our proposed algorithm is competitive with existing state-of-the-art approaches. 


The paper is structured as follows. In Section~\ref{sec:TypeTwo}, we derive the NUFFT-II algorithm by first assuming that the nonuniform samples are 
a perturbed equispaced grid (see Section~\ref{sec:TypeII_perturbed}) before generalizing to any distribution of samples (see Section~\ref{sec:GeneralPosition_TypeII}). In Section~\ref{sec:TypeOne} we extend the algorithm to derive a NUFFT-I,  NUFFT-III, and inverse transforms. 
In Section~\ref{sec:TwoDimensional}, we describe the two-dimensional analogue of our NUFFT-II. 

\section{The nonuniform fast Fourier transform of type II}\label{sec:TypeTwo} 
In this section, we describe an $\mathcal{O}(N\log N \log(1/\epsilon)/\log\!\log(1/\epsilon))$ algorithm 
to compute the NUDFT-II of size $N$ (see~\eqref{eq:NUDFT2}) with a working precision of $0<\epsilon<1$. 
We begin by making the simplifying assumption that the samples $x_0,\ldots,x_{N-1}$ are nearly equispaced before describing the general algorithm.  

\subsection{Samples are a perturbed equispaced grid}\label{sec:TypeII_perturbed}
Suppose that the samples $x_0,\ldots,x_{N-1}$ are distributed such that there exists a parameter $0\leq \gamma\leq 1/2$ satisfying 
\begin{equation}
\left|x_j - \frac{j}{N}\right| \leq \frac{\gamma}{N}, \qquad 0\leq j\leq N-1. 
\label{eq:PerturbedSamples}
\end{equation} 
This assumption guarantees that a closest equispaced point to $x_j$ is $j/N$, which simplifies the description of our algorithm. 

Using the fact that $\omega_k = k$ for $0\leq k\leq N-1$ and properties of the exponential function, we can factor the entries 
of $\tilde{F}_2$ as 
\begin{equation}
(\tilde{F}_2)_{jk} = e^{-2\pi i x_j k} = e^{-2\pi i (x_j-j/N) k}e^{-2\pi i j k/N}, \qquad 0\leq j,k\leq N-1, 
\label{eq:NUFFTI}
\end{equation} 
which shows that the $(j,k)$ entry of $\tilde{F}_2$ can be written as a complex number multiplied by the $(j,k)$ entry of the DFT matrix. 
The expression in~\eqref{eq:NUFFTI} gives us the following matrix decomposition:
\begin{equation}
\tilde{F}_2 = A\circ F, \qquad A_{jk} = e^{-2\pi i (x_j-j/N) k},
\label{eq:A}
\end{equation} 
where `$\circ$' is the Hadamard product.  The observation in~\eqref{eq:Observation} is
equivalent to the matrix $A$ being well-approximated by a low rank matrix so that $A \approx A_K = \underline{u}_0\underline{v}_0^\intercal + \cdots + \underline{u}_{K-1}\underline{v}_{K-1}^\intercal$. Since $\left(\underline{u}\,\underline{v}^\intercal\right) \circ F = D_{\underline{u}} F D_{\underline{v}}$, we conclude that \begin{equation}
\tilde{F}_2\underline{c} = (A\circ F)\underline{c} \approx (A_K\circ F)\underline{c} = \sum_{r=0}^{K-1} D_{\underline{u}_r} F D_{\underline{v}_r}\underline{c}, \qquad D_{\underline{u}_r} = {\rm diag}((\underline{u}_r)_1,\ldots, (\underline{u}_r)_N).
\label{eq:algorithm}
\end{equation} 
Therefore, an approximation to $\tilde{F}_2\underline{c}$ can be computed in $\mathcal{O}(K N\log N)$ operations via the FFT as each term in the sum in~\eqref{eq:algorithm} involves diagonal matrices and the DFT matrix. Moreover, each matrix-vector product in the sum can be computed independently and the resulting vectors added together afterwards. 

All that remains is to show that $A$ can in fact be well-approximated by a low rank matrix, or equivalently, that $K$ is relatively small, and to construct a low rank approximation $A_K$ for $A$. We cannot use the singular value decomposition for this\footnote{Recall that the truncated singular value decomposition of $A$, formed by taking the first $K$ singular vectors and values, leads to the best rank $K$ approximation to $A$ in the spectral norm~\cite{Eckart_36_01}.} because that 
costs $\mathcal{O}(N^3)$ operations and would dominate the algorithmic complexity of the NUFFT-II.  Instead, we note that $A$ can be viewed as a matrix 
obtained by sampling $e^{-i x y}$ at points in $[-\gamma,\gamma]\times [0,2\pi]$ and we construct a low rank approximation via an approximation of the function $e^{-i x y}$.

\subsubsection{Low rank approximation via Taylor expansions}
A natural way to construct a low rank approximation to $A$ is via Taylor expansion by exploiting the fact that  
$(x_j-j/N)k$ is relatively small for $0\leq j,k\leq N-1$. The NUFFT developed here is equivalent to~\cite{Anderson_96_01} (without oversampling) and is numerically unstable.  In this direction, consider the Taylor expansion of $e^{-x}=1 - x + x^2/2 - x^3/6 + \cdots$ about $x = 0$. Applying this Taylor series to each entry of $A$, we find that for $0\leq j,k\leq N-1$
\begin{equation}
A_{jk} = e^{-2\pi i (x_j-j/N) k} = \sum_{r = 0}^\infty \frac{(-2\pi i (x_j-j/N) k)^{r}}{r!} \approx \sum_{r = 0}^{K-1} \frac{(-2\pi i (x_j-j/N) k)^{r}}{r!},
\label{eq:TaylorExpansion}
\end{equation}
where the expansion is truncated after $K$ terms to deliver an approximation. Now, if we let  
$\underline{x} = \left(x_0, \ldots, x_{N-1}\right)^\intercal$, $\underline{e} = \left(0,1/N,\ldots,(N-1)/N\right)^\intercal$, and $\underline{\omega} = (0,1,\ldots,N-1)^\intercal$, then~\eqref{eq:TaylorExpansion} 
can be applied to each entry of $A$ to find that 
\[ 
A = \exp\!\left(-2\pi i (\underline{x}-\underline{e})\underline{\omega}^\intercal\right) \approx \sum_{r = 0}^{K-1} \frac{(-i)^{r}}{r!} (2\pi(\underline{x}-\underline{e}) \underline{\omega}^\intercal)^{r} = A_K.
\]
Here, the notation $\underline{x}\,\underline{\smash{y}}^\intercal$ denotes a rank~1 matrix, $\exp(\underline{x}\,\underline{\smash{y}}^\intercal)$ is the matrix formed by applying the exponential function entry-by-entry to $\underline{x}\,\underline{\smash{y}}^\intercal$, and $(\underline{x}\,\underline{\smash{y}}^\intercal)^r$ is the entry-by-entry $r$th power of $\underline{x}\,\underline{\smash{y}}^\intercal$. 

Since $|2\pi i (x_j-j/N)k| \leq 2\pi \gamma$ for $0\leq j,k\leq N-1$, error estimates for the truncated Taylor expansion of $e^{-x}$ for $x\in [0,2\pi \gamma]$ shows that $\|A-A_K\|_{\max}\leq \epsilon$ for $K = \mathcal{O}(\log(1/\epsilon))$~\cite{Anderson_96_01}, where $\|\cdot\|_{\max}$ is the absolute maximum matrix entry. 
To avoid overflow issues, one should take the vectors $\underline{u}_r = (N(\underline{x}-\underline{e}))^{r}$ and $\underline{v}_r = (-i)^{r}(2\pi \underline{\omega}/N)^{r}/r!$ for $1\leq r\leq K$ in~\eqref{eq:algorithm}. Unfortunately, we observe that the Taylor-based approach is 
numerically unstable (even with modest oversampling) in agreement with the experiments in~\cite[Ex.~3.10]{Kunis_06_01}. This is because for moderate $K$ ($\geq 7$) the matrix $A_K$ is constructed by evaluating high-degree monomial powers. For this reason, the NUFFT-II described in~\cite{Anderson_96_01} is seldom used. We must construct the matrix $A_K$ in a different way. 

\subsubsection{Low rank approximation via Chebyshev expansions}\label{sec:ChebyshevLowRank} 
One can often stabilize high-degree Taylor expansions by replacing them with Chebyshev expansions. We do that now. 

For an integer $p\geq 0$, the Chebyshev polynomial of degree $p$ is given by $T_p(x) = \cos(p\cos^{-1} x)$ on $x\in[-1,1]$ and 
the set $\{T_0,T_1,\ldots,T_{K-1}\}$ is an orthogonal basis for the space of polynomials of degree at most $K-1$, with respect to the weight function 
$(1-x^2)^{-1/2}$ on $[-1,1]$.  We can use a Chebyshev series to represent nonperiodic functions, in the same way that a Fourier series can 
represent periodic functions~\cite{Trefethen_13_01}. 

In the Appendix in Theorem~\ref{thm:LowRankApproximation}, we derive a low rank approximation for $A$ by using Chebyshev expansions. 
If $\gamma = 0$, then $A$ is the matrix of all ones and the low rank approximation is trivial. If $\gamma>0$, then for $0<\epsilon <1$ we find an integer $K$ (see~\eqref{eq:ExplicitK}) and a matrix $A_K$ such that  $\left\| A - A_K\right\|_{\max} \leq \epsilon$, where $\|\cdot\|_{\max}$ 
denotes the absolute maximum matrix entry.  The matrix $A_K$ is defined by (see Theorem~\ref{thm:LowRankApproximation})
\begin{equation}
A_K = \sum_{r=0}^{K-1}\!{}^{'} \!\underbrace{\left[\sum_{p=0}^{K-1}\!{}^{'}  a_{pr} \left(\exp\left(-i\pi N(\underline{x}-\underline{e})\right)\circ T_p(\tfrac{N(\underline{x}-\underline{e})}{\gamma})\right)\right]}_{=\underline{u}_{r}}\underbrace{T_{r}(\tfrac{2\underline{\omega}^\intercal}{N}-\mathbf{1}^\intercal)}_{\tiny =\begin{cases}\underline{v}_r^\intercal & r\geq 1\\ 2\underline{v}_0^\intercal & r=0 \end{cases} },
\label{eq:ChebyshevLowRank}
\end{equation} 
where $\mathbf{1}$ is the $N\times 1$ vector of ones and the primes on the summands indicate that the first term is halved.  The coefficients $a_{pr}$ 
for $0\leq p,r\leq K-1$ are known explicitly as 
\[
a_{pr} = \begin{cases} 4i^rJ_{(p+r)/2}(-\gamma\pi/2)J_{(r-p)/2}(-\gamma\pi/2), & {\rm mod}(|p-r|,2)=0,\\ 0,& \text{otherwise},\end{cases}
\]
where $J_\nu(z)$ is the Bessel function of parameter $\nu$ at $z$~\cite[Chap.~10]{NISTHandbook}.
Here in~\eqref{eq:ChebyshevLowRank}, $\exp(\underline{x})$ and $T_s(\underline{x})$ denote the exponential and Chebyshev polynomial evaluated at each entry of $\underline{x}$ to form another vector, respectively. 

The expansion in~\eqref{eq:ChebyshevLowRank} provides us with a rank $K$ matrix that approximates $A$ as $A = \lim_{K\rightarrow\infty}A_K$. From the convergence properties of Chebyshev expansions, for each fixed $K$, an explicit upper bound is known for $\|A-A_K\|_{\max}$ (see Appendix~\ref{sec:Appendix}).  The vectors $\underline{u}_0,\ldots,\underline{u}_{K-1},\underline{v}_0,\ldots,\underline{v}_{K-1}$ in~\eqref{eq:ChebyshevLowRank} are evaluated via computing the Chebyshev polynomials using a three-term recurrence relation~\cite[Tab.~18.9.1]{NISTHandbook}. This requires a total of $\smash{\mathcal{O}(K^2N)}$ operations. This cost should strictly be included in the final complexity of the NUFFT-II, but we will not include it because this is part of the ``planning stage'' (see Section~\ref{sec:AlgorithmicDetails}). 

In~\eqref{eq:ChebyshevLowRank} for $\gamma>0$, the integer $K$ is given by the expression (see Theorem~\ref{thm:LowRankApproximation})
\begin{equation}
K = \max\left\{3, \Big\lceil 5\gamma e^{W\!\left(\log(140/\epsilon)/(5\gamma)\right)}\Big\rceil\right\} = \mathcal{O}\left(\frac{\log(1/\epsilon)}{\log\!\log(1/\epsilon)}\right),
\label{eq:ExplicitK}
\end{equation}
where $W(x)$ is the Lambert-W function~\cite[(4.13.1)]{NISTHandbook}, $0\leq \gamma\leq1/2$ is the perturbation parameter from~\eqref{eq:PerturbedSamples}, and $\lceil x\rceil$ is the nearest integer above or equal to $x\geq 0$. By asymptotic approximations of $W(x)$ as $x\rightarrow\infty$, we find that $K = \mathcal{O}(\log(1/\epsilon)/\log\!\log(1/\epsilon))$ as $\epsilon\rightarrow 0$~\cite[(4.13.10)]{NISTHandbook} and hence, $\tilde{F}_2\underline{c}$ can be computed in a total of $\mathcal{O}(N\log N\log(1/\epsilon)/\log\!\log(1/\epsilon))$ operations using~\eqref{eq:algorithm}. 

It is relatively common in practice to have perturbed equispaced samples so we always compute 
the parameter $0\leq \gamma\leq 1/2$ in~\eqref{eq:PerturbedSamples} in order to select the smallest
possible integer $K$ with $\|A-A_K\|_{\max}\leq \epsilon$. In our implementation of the NUFFT-II, we do not use the formula for $K$ in~\eqref{eq:ExplicitK} because it is 
only asymptotically sharp and the constants are not tight.   Instead, in~\eqref{eq:ChebyshevLowRank} we 
use the values of $K$ given in Table~\ref{tab:OptimalK}, which are selected from empirical observations. In particular, in double precision we 
use at most $K=16$, corresponding to the cost of a NUFFT-II being approximately 16 FFTs of size $N$. 
\begin{table}
\centering
\begin{tabular}{ccccccc} 
\hline\\[-8pt]
 & $\gamma=0$ & $0<\gamma\leq \tfrac{1}{32}$ &  $\tfrac{1}{32}<\gamma\leq \tfrac{1}{16}$ &  $\tfrac{1}{16}<\gamma\leq\tfrac{1}{8}$ &  $\tfrac{1}{8}<\gamma\leq \tfrac{1}{4}$ &  $\tfrac{1}{4}<\gamma\leq \tfrac{1}{2}$ \\[3pt]
 \hline\\[-8pt]
$\text{double}$ & $1$ & 8 & 9 & 11 & 13 & 16 \\[3pt]
$\text{single}$ & $1$ & 5  & 6 & 7 & 8 & 10 \\[3pt]
$\text{half}$ & $1$ & 3 & 3 & 4 & 5 & 7\\[3pt]
\hline
\end{tabular}
\caption{When the samples $x_0,\ldots,x_{N-1}$ are perturbed equispaced samples with respect to a parameter $0\leq \gamma\leq 1/2$ (see~\eqref{eq:PerturbedSamples}) the NUDFT-II can be decomposed 
as $\tilde{F}_2 = A\circ F$, where $F$ is the DFT matrix and $A$ can be approximated by a rank $K$ matrix, up to a working accuracy of $0<\epsilon <1$. We give the values of $K$ that we use in~\eqref{eq:ChebyshevLowRank} for various values of $\gamma$ (see~\eqref{eq:PerturbedSamples}) and working accuracies: 1st row $\epsilon\approx 2.2\times 10^{-16}$, 2nd row $\epsilon\approx 1.2\times 10^{-7}$, and last row $\epsilon\approx9.8\times 10^{-4}$. Our NUFFT-II roughly costs $K$ FFTs of size $N$, though these can be performed in parallel.} 
\label{tab:OptimalK}  
\end{table}  

In practice, it is also common to not always need a working precision of $\epsilon \approx  2.2\times 10^{-16}$ so we adaptively select the integer $K$ based on that parameter too.  For example, with a working precision of $9.8\times 10^{-4}$ the NUFFT-II costs at most seven FFTs of size $N$.  

\subsection{Arbitrarily distributed samples}\label{sec:GeneralPosition_TypeII}
Suppose that the samples $x_0,\ldots, x_{N-1}$ in~\eqref{eq:NUDFT2} are arbitrarily distributed real numbers. The properties of the complex 
exponential, $x\mapsto e^{-2\pi i x k}$ for $0\leq k\leq N-1$, allow us to assume, without loss of generality, that the samples are in the interval $[0,1)$; otherwise, they can be translated to that interval using periodicity. For convenience, in this section we assume that $x_0,\ldots, x_{N-1}\in[0,1)$, though our implementation does not have this restriction.  In this general setting, the observation in~\eqref{eq:Observation} is no longer valid because the samples are arbitrarily distributed. 

Instead, define a sequence $s_0,\ldots, s_{N-1}$ that takes values from $\{0,\ldots,N\}$ and is defined so that $s_j/N$ is the closest node to $x_j$ from an 
equispaced grid of size $N+1$ (ties can be broken arbitrarily). Since each $x_j$ is a distance of at most $1/(2N)$ from these equispaced
nodes, we have
\begin{equation}
\left|x_j - \frac{s_j}{N}\right| \leq \frac{1}{2N}, \qquad 0\leq j\leq N-1.
\label{eq:s_vec}
\end{equation} 
Figure~\ref{fig:generalNUFFT} illustrates this process when $N = 8$. The sequence can be easily computed via the relationship $\underline{s} = {\rm round}(N\underline{x})$, where ${\rm round}(N\underline{x})$ returns the nearest integer to each entry of the vector $N\underline{x}$. 
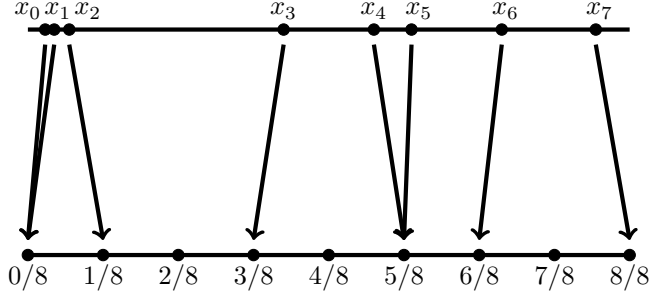
\begin{figure} 
\centering
\begin{tikzpicture} 
\draw[ultra thick] (0,0)--(8,0);
\draw[ultra thick] (0,3)--(8,3);
\draw[black,fill=black] (0,0) circle (.5ex);
\draw[black,fill=black] (1,0) circle (.5ex);
\draw[black,fill=black] (2,0) circle (.5ex);
\draw[black,fill=black] (3,0) circle (.5ex);
\draw[black,fill=black] (4,0) circle (.5ex);
\draw[black,fill=black] (5,0) circle (.5ex);
\draw[black,fill=black] (6,0) circle (.5ex);
\draw[black,fill=black] (7,0) circle (.5ex);
\draw[black,fill=black] (8,0) circle (.5ex);
\draw[black,fill=black] (.23,3) circle (.5ex);
\draw[black,fill=black] (.35,3) circle (.5ex);
\draw[black,fill=black] (.55,3) circle (.5ex);
\draw[black,fill=black] (3.4,3) circle (.5ex);
\draw[black,fill=black] (4.6,3) circle (.5ex);
\draw[black,fill=black] (5.1,3) circle (.5ex);
\draw[black,fill=black] (6.3,3) circle (.5ex);
\draw[black,fill=black] (7.55,3) circle (.5ex);
\draw[black,->,ultra thick] (.23,2.8)--(0,.2);
\draw[black,->,ultra thick] (.35,2.8)--(0,.2);
\draw[black,->,ultra thick] (.55,2.8)--(1,.2);
\draw[black,->,ultra thick] (3.4,2.8)--(3,.2);
\draw[black,->,ultra thick] (4.6,2.8)--(5,.2);
\draw[black,->,ultra thick] (5.1,2.8)--(5,.2);
\draw[black,->,ultra thick] (6.3,2.8)--(6,.2);
\draw[black,->,ultra thick] (7.55,2.8)--(8,.2);
\node at (0,3.25) {$x_0$};
\node at (.4,3.25) {$x_1$};
\node at (.8,3.25) {$x_2$};
\node at (3.4,3.25) {$x_3$};
\node at (4.6,3.25) {$x_4$};
\node at (5.2,3.25) {$x_5$};
\node at (6.35,3.25) {$x_6$};
\node at (7.6,3.25) {$x_7$};
\node at (0,-.3) {$0/8$};
\node at (1,-.3) {$1/8$};
\node at (2,-.3) {$2/8$};
\node at (3,-.3) {$3/8$};
\node at (4,-.3) {$4/8$};
\node at (5,-.3) {$5/8$};
\node at (6,-.3) {$6/8$};
\node at (7,-.3) {$7/8$};
\node at (8,-.3) {$8/8$};
\end{tikzpicture} 
\caption{An illustration of how nonuniform samples on $[0,1]$ are assigned to the closest equispaced grid point for $N = 8$. In this example, the sequence $s_0,\ldots,s_{N-1}$ takes the values $s_0 = s_1 = s_2 = 0$, $s_3 = 3$, $s_4=s_5 = 5$, $s_6=6$, and $s_7 = 8$.  Here $0\leq x_0\leq x_1\leq \cdots\leq x_{N-1}< 1$, but the samples do not necessarily need to be ordered. Since $s_7 = 8$, the sample $x_7$ is later assigned to the equispaced point at $0$ (see~\eqref{eq:t_vec}).}
\label{fig:generalNUFFT}
\end{figure} 

If $s_j = N$ then we must reassign $x_j$ because the uniform DFT does not contain a sample at $s_j/N = 1$. Using the periodicity of the complex exponential, we use the 
identity $\smash{e^{-2\pi i s_j k/N} = e^{-2\pi i 0 k/N} = 1}$ to assign $x_j$ to the equispaced node at $0$. This can be 
done simply by defining another sequence $t_0,\ldots,t_{N-1}$, which takes values from $\{0,\ldots, N-1\}$, and is given by 
\begin{equation}
t_j = \begin{cases}s_j, & 0\leq s_j\leq N-1,\\ 0,& s_j = N.\end{cases}
\label{eq:t_vec}
\end{equation} 
In practice, one can easily compute the vector $\underline{t}$ directly from $\underline{x}$ since
\[
\underline{t} = {\rm mod}( {\rm round}( N\underline{x} ), N ),
\]
where ${\rm mod}(\underline{a},N)$ is the modulo-$N$ operation on each entry of $\underline{a}$. 

From the properties of the exponential function and the definition of $t_0,\ldots,t_{N-1}$, we find that 
\begin{equation}
(\tilde{F}_2)_{jk} = e^{-2\pi i x_j k} = e^{-2\pi i (x_j-s_j/N) k}e^{-2\pi i s_j k/N} =  e^{-2\pi i (x_j-s_j/N) k}e^{-2\pi i t_j k/N}.
\label{eq:GeneralDerivation} 
\end{equation} 
This means that the $(j,k)$ entry of $\tilde{F}_2$ can be expressed as a product of $e^{-2\pi i (x_j-s_j/N) k}$ and the $(t_j,k)$ entry 
of $F$ for $0\leq j,k\leq N-1$. Equivalently, by setting $A_{jk} = \smash{e^{-2\pi i (x_j-s_j/N) k}}$ for $0\leq j,k\leq N-1$, we can write~\eqref{eq:GeneralDerivation} 
as the following matrix decomposition:
\[
\tilde{F}_2 = A \circ F(\underline{t},:), \qquad ( F(\underline{t},:) )_{jk} = e^{-2\pi i t_j k/N},\quad 0\leq j,k\leq N-1,
\]
where $\underline{t} = (t_0,\ldots,t_{N-1})^\intercal$. Note that $F(\underline{t},:)$ denotes the matrix formed by extracting the rows indexed by $(t_0,\ldots,t_{N-1})$ from the DFT matrix. 

Since $N(x_j - s_j/N) \in [-1/2,1/2]$ and $k/N \in [0,1]$, we find that $A$ can be well-approximated by a low rank 
matrix using the same idea as in Section~\ref{sec:ChebyshevLowRank}. This leads to the 
rank~$K$ approximation $A_K$ to $A$, given by 
\[
A_K = \sum_{r=0}^{K-1}\!{}^{'} \!\underbrace{\left[\sum_{p=0}^{K-1}\!{}^{'}  a_{pr} \left(\exp\left(-i\pi N(\underline{x}-\underline{s}/N)\right)\circ T_p(2N(\underline{x}-\underline{s}/N))\right)\right]}_{=\underline{u}_{r}}\underbrace{T_{r}(\tfrac{2\underline{\omega}^\intercal}{N}-\mathbf{1}^\intercal)}_{\tiny =\begin{cases}\underline{v}_r^\intercal & r\geq 1\\ 2\underline{v}_0^\intercal & r=0 \end{cases}},
\]
where $\underline{s} = (s_0,\ldots,s_{N-1})^\intercal$. Here, $\|A-A_K\|_{\max}\leq \epsilon$ for some $0<\epsilon <1$ and $K$ is the value in~\eqref{eq:ExplicitK} with $\gamma = 1/2$. 
 
In summary, we find that the matrix-vector product, $\tilde{F}_2\underline{c}$, can be approximately computed with a working accuracy of $0<\epsilon<1$ via the approximation
\begin{equation} 
\tilde{F}_2\underline{c} = \left(A\circ F(\underline{t},:)\right)\underline{c} \approx \left(A_K\circ F(\underline{t},:)\right)\underline{c} = \sum_{r=0}^{K-1} D_{\underline{u}_r}F(\underline{t},:)D_{\underline{v}_r}\underline{c}.
\label{eq:Fc}
\end{equation} 
This leads to an $\mathcal{O}(N\log N\log(1/\epsilon)/\log\!\log(1/\epsilon))$ complexity NUFFT-II because: (1) The matrix-vector products with the diagonal matrices $D_{\underline{u}_r}$ and $D_{\underline{v}_r}$ can be performed in $\mathcal{O}(N)$ operations, and (2) The matrix-vector product $F(\underline{t},:)\underline{c}$ can be computed in $\mathcal{O}(N\log N)$ operations via the FFT and the relationship $F(\underline{t},:)\underline{c} = I_N(\underline{t},:)F\underline{c}$, where $I_N$ is the $N\times N$ identity matrix and $I_N(\underline{t},:)$ denotes the matrix obtained by extracting the $(t_0,\ldots,t_{N-1})$ rows of the identity matrix. Again, each term in the sum in~\eqref{eq:Fc} can be computed in parallel and the resulting vectors added together afterwards. 

\subsection{Algorithmic details}\label{sec:AlgorithmicDetails} 
There are a handful of algorithmic details. 
\begin{itemize}[leftmargin=*]

\item {\bf Oversampling:} In Section~\ref{sec:GeneralPosition_TypeII}, we assign $N$ samples $x_0,\ldots,x_{N-1}$ to an equispaced grid of size $N$. The process of oversampling, which occurs in many other NUFFTs, translates $N$ samples to an equispaced grid of size $M$, where $M>N$.  In our setting, this results in FFTs of size $M$ in~\eqref{eq:Fc} with potentially a smaller integer $K$ because $|x_j-s_j/M|\leq 1/(2M)<1/(2N)$. Naively, since our algorithm is numerically stable without oversampling, it would seem that oversampling is never beneficial for us. For example, in double precision if $M = 2N$, then $13$ FFTs of size $2N$ are required (see Table~\ref{tab:OptimalK}) instead of $16$ FFTs of size $N$. In practice, it is a little more complicated as one may benefit from selecting an integer $N\leq M<2N$ that has a convenient prime factorization for the FFT~\cite{Cooley_65_01}. We have not explored this possibility yet. 

\item {\bf Vectorization:} One can vectorize the FFTs in~\eqref{eq:Fc} by computing $\tilde{F}_2\underline{c}$ in two steps:  
\[
\begin{aligned} 
&(\text{Step } 1) \quad X = I_N(\underline{t},:)F\begin{bmatrix} D_{\underline{v}_0}\underline{c} \,|\,\cdots\,|\, D_{\underline{v}_{K-1}}\underline{c}\end{bmatrix},\\
&(\text{Step } 2) \quad \tilde{F}_2\underline{c} = \begin{bmatrix} D_{\underline{u}_0}X_0 \,|\,\cdots\,|\, D_{\underline{u}_{K-1}}X_{K-1}\end{bmatrix}\!\!\begin{bmatrix}1\\\vdots\\ 1\end{bmatrix},\\
\end{aligned} 
\]
where $X_k$ denotes the $k$th column of $X$. In the programming language Julia~\cite{Bezanson_14_01} this can be implemented in the one-liner:
\vspace{.1cm}
\begin{center} 
\begin{verbatim}
      nufft2(c) = (U.*(fft(Diagonal(c)*V,1)[t+1,:]))*ones(K),
\end{verbatim} 
\end{center} 
\vspace{.1cm}

where ${\tt U}=\left[\underline{u}_0\,|\,\cdots\,|\,\underline{u}_{K-1}\right]$, ${\tt V}=\left[\underline{v}_0\,|\,\cdots\,|\,\underline{v}_{K-1}\right]$, and the variable ${\tt t}$ is the vector~$\underline{t}$.

\item {\bf Planning the transform:} Most implementations of fast transforms these days have a {\em planning stage}~\cite{Frigo_98_01}, where ancillary quantities are computed that do not depend on the entries of $\underline{c}$. This stage may also involve memory allocation and the finalization of recursion details~\cite{Frigo_98_01}. For our NUFFT-II, the planning stage consists of computing $\gamma$, $\underline{t}$, and $K$, planning the FFTs~\cite{Frigo_98_01}, as well as computing the vectors $\underline{u}_0,\ldots,\underline{u}_{N-1},\underline{v}_0,\ldots,\underline{v}_{N-1}$ for the low rank approximation $A_K$. These quantities and data structures are then stored in memory so that the NUFFT-II is computationally faster. After the planning stage of our NUFFT-II, there is an {\em online stage}, where the transform is essentially the one-liner for the {\tt nufft2(c)} call above. It is particularly important to plan a NUFFT-II when the matrix-vector product with $\tilde{F}_2$ is desired for many vectors.

\end{itemize} 

\subsection{Numerical results}
We have two different implementations of the transforms in this paper: (1) A MATLAB implementation, where the NUFFT-II transform is assessable via the {\tt chebfun.nufft} command in Chebfun~\cite{Driscoll_14_01},\footnote{Note to reviewer: The code is currently publicly available through GitHub, but is still under code review. It will hopefully appear in the next release of Chebfun.} and (2) A Julia implementation, which is publicly available via the {\tt nufft2} command in the {\tt FastTransforms.jl} package~\cite{FastTransforms}. Since the dominating computational cost of our transforms are FFTs, and these are computed via the FFTW library~\cite{Frigo_98_01}, the cost of our algorithms are 
approximately the same in MATLAB and Julia.\footnote{By default the {\tt fft} command in MATLAB has multithreading capabilities. To see a similar performance in Julia, one must execute the command {\tt FFTW.set\_\,num\_threads(n)}, where {\tt n} is an appropriate number of threads.} 

Recall that there are two stages of the transform: (1) A planning stage in which ancillary quantities are computed (see Section~\ref{sec:AlgorithmicDetails}) and (2) An online stage, where  the transform needs knowledge of the vector $\underline{c}$ in~\eqref{eq:NUDFT2} and the desired vector $\underline{f}$ is computed. When the same NUFFT-II transform is applied to multiple vectors, the planning stage is only performed once while the online stage is executed for every new vector. 

Figure~\ref{fig:NUFFT_eps} (left) shows the execution times\footnote{Computational results were performed on an Intel(R) Xeon(R) CPU E5-2698 v4 @ 2.20GHz in Julia v0.5.0.} of the NUFFT-II transform in both the planning stage and the online stage the NUFFT-II (right).  The online stage of the NUFFT-II is approximately 16 FFTs in double precision, as expected from Table~\ref{tab:OptimalK}, and takes approximately 8 seconds to compute the transform when $N$ is 16 million. Figure~\ref{fig:NUFFT_eps} shows that our NUFFT-II is competitive to the Julia implementation of the NFFT software~\cite{NFFT}.  

\begin{figure} 
\begin{minipage}{.49\textwidth} 
\begin{overpic}[width=\textwidth,height=4cm]{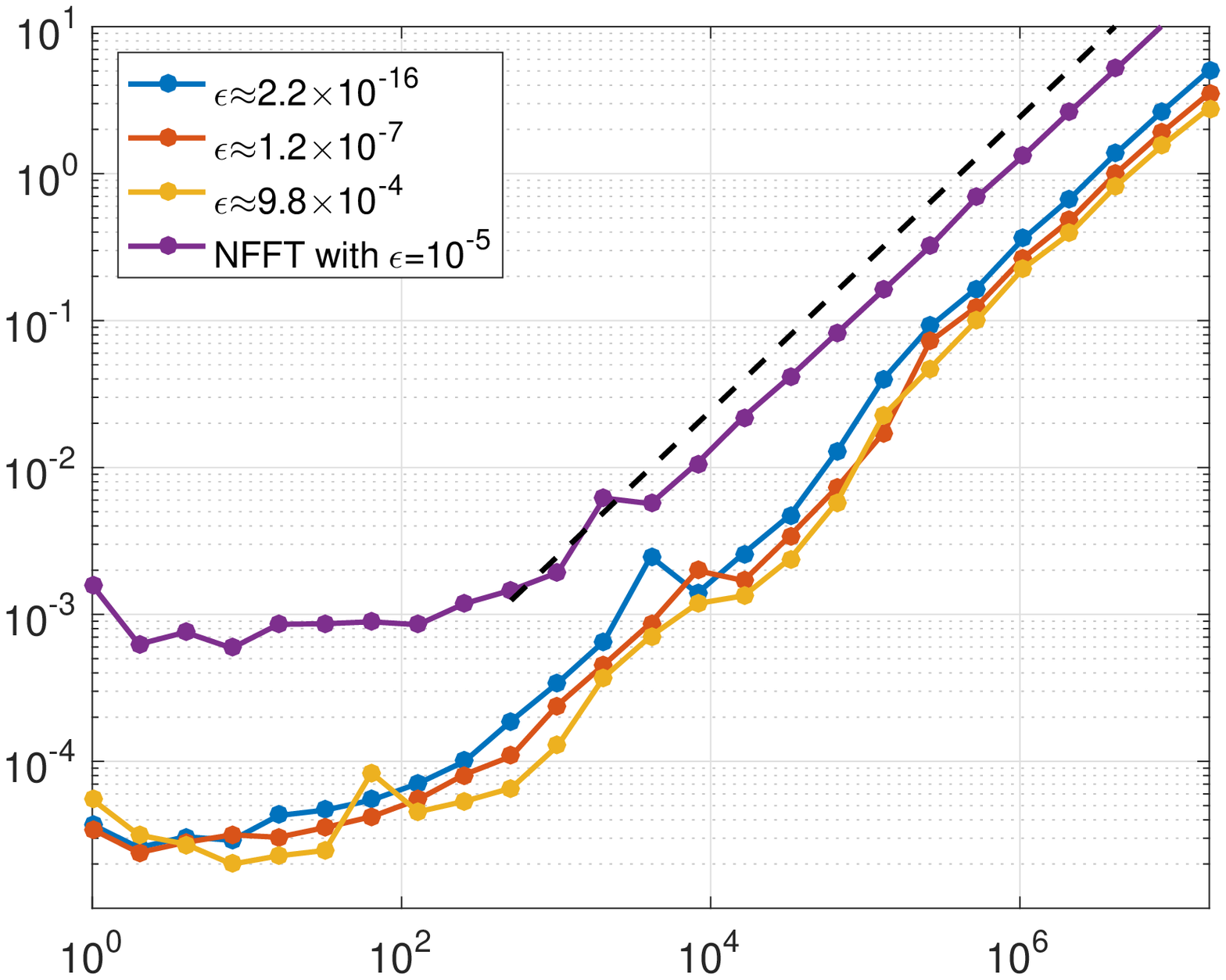}
 \put(50,0) {\footnotesize{$N$}}
\put(3,15) {\footnotesize{\rotatebox{90}{Execution time}}}
\put(60,44) {\footnotesize{\rotatebox{40}{$\mathcal{O}(N)$}}}
\end{overpic}  
\end{minipage} 
\begin{minipage}{.49\textwidth} 
\begin{overpic}[width=\textwidth,height=4cm]{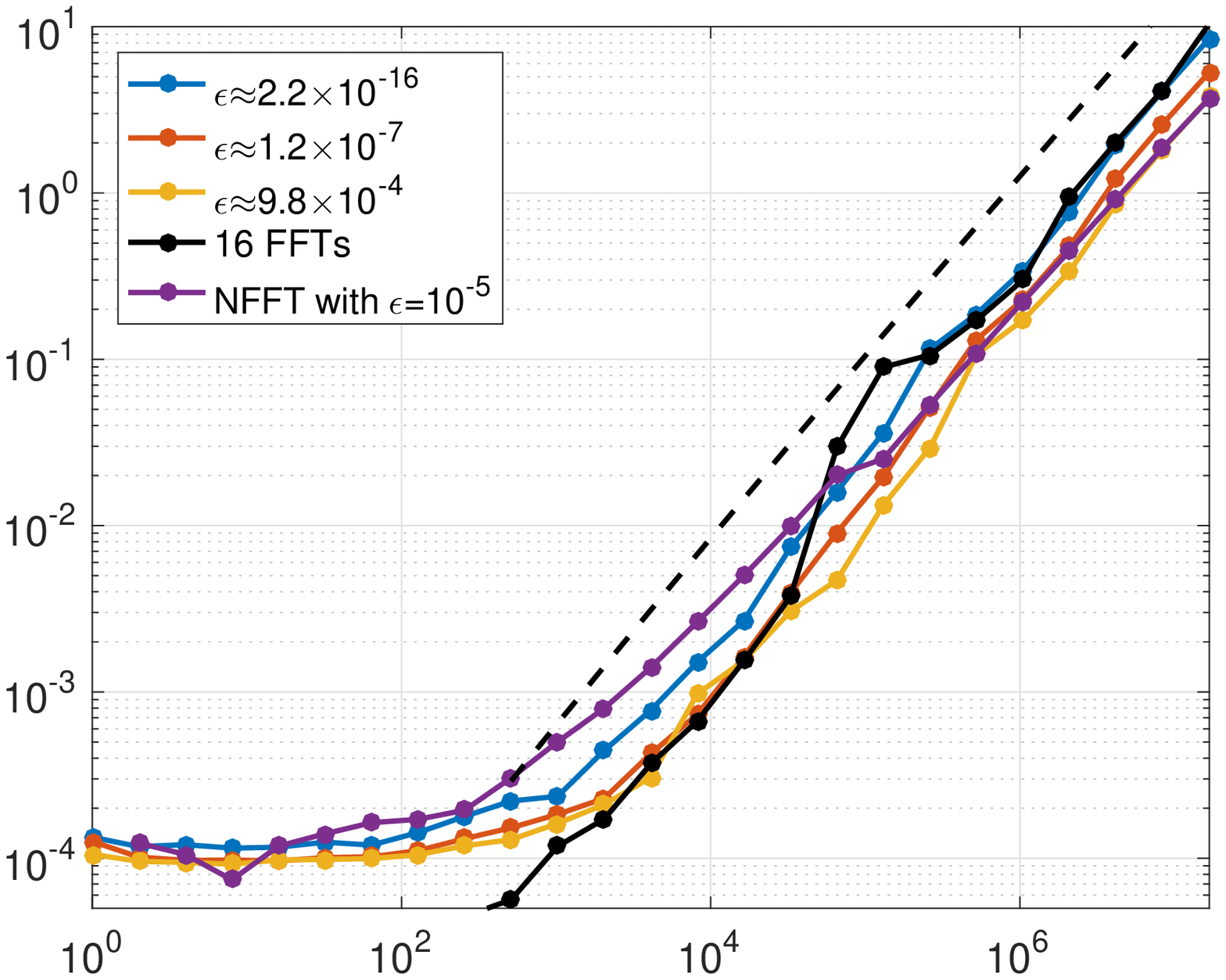}
 \put(50,0) {\footnotesize{$N$}}
\put(3,15) {\footnotesize{\rotatebox{90}{Execution time}}}
\put(60,37) {\footnotesize{\rotatebox{45}{$\mathcal{O}(N\log N)$}}}
\end{overpic}  
\end{minipage} 
\caption{Left: The computational cost of planning our NUFFT-II transform for $\epsilon \approx  2.2\times 10^{-16}$, $\epsilon \approx 1.2\times 10^{-7}$, and $\epsilon \approx  9.8\times 10^{-4}$ as well as the planning cost of the Julia implementation of the NFFT transform~\cite{Potts_03_01,NFFT}.  Right: The computational cost of computing our NUFFT-II after planning for $\epsilon \approx  2.2\times 10^{-16}$, $\epsilon \approx  1.2\times 10^{-7}$, and $\epsilon \approx  9.8\times 10^{-4}$. For comparison we have include the cost of the Julia implementation of the NFFT transform after planning~\cite{NFFT} and the cost of 16 FFTs after planning. }
\label{fig:NUFFT_eps} 
\end{figure} 

Figure~\ref{fig:NUFFT_gamma} (left) demonstrates the execution times of the online stage of our NUFFT-II for samples that are perturbed equispaced grids with $\gamma=1/2$, $\gamma=1/8$, $\gamma=1/32$, and $\gamma=0$ (see~\eqref{eq:PerturbedSamples}). For definitiveness, we chose the samples to be the so-called {\em worst grid} for each $\gamma$ in the NUFFT-II (see~\cite[Sec.~3.3.1]{Austin_16_01}), i.e., 
\[
x_j = \begin{cases} (j+\gamma)/N,& 0\leq j\leq \lfloor N/2\rfloor,\\ (j-\gamma)/N,&  \text{otherwise}. \end{cases}
\]
We see that the NUFFT-II is more computationally efficient when the samples are closer to an equispaced grid, as expected from the values of $K$ in Table~\ref{tab:OptimalK}. 

Our NUFFT-II relies on a matrix approximation; namely, the approximation of the matrix $A$ in~\eqref{eq:A} by a low rank approximation $A_K$. Therefore, if $\underline{f}^{\textnormal{\tiny \text{exact}}}$ 
is the vector calculated from $\underline{f}^{\textnormal{\tiny \text{exact}}} = \tilde{F}_2\underline{c} = (A\circ F)\underline{c}$, then our algorithm calculates the approximation $\underline{f}^{\textnormal{\tiny \text{approx}}}=(A_K\circ F)\underline{c}$. The incurred error can be simply bounded as follows: 
\[
\begin{aligned} 
\|\underline{f}^{\textnormal{\tiny \text{exact}}} - \underline{f}^{\textnormal{\tiny \text{approx}}}\|_2 &= \| ((A-A_K)\circ F)\underline{c}\|_2\\
&\leq  \| ((A-A_K)\circ F)\|_2 \|\underline{c}\|_2 \\
& \leq  \| ((A-A_K)\circ F)\|_{\rm F} \|\underline{c}\|_2 \\
&\leq \|A-A_K\|_{\max}\|F\|_{\rm F}\|c\|_2 \\
&\leq N\epsilon \|\underline{c}\|_2,
\end{aligned} 
\]
where $\|\cdot\|_{\rm F}$ denotes the matrix Frobenius norm and the last inequality follows from the fact that $\|A-A_K\|_{\max}\leq \epsilon$ and $\|F\|_{\rm F} = N$. In Figure~\ref{fig:NUFFT_gamma} (right) we observe that the relative error $\|\underline{f}^{\textnormal{\tiny \text{exact}}} - \underline{f}^{\textnormal{\tiny \text{approx}}}\|_2/\|\underline{c}\|_2$ grows like $\mathcal{O}(N^{3/2})$, where the extra $\mathcal{O}(N^{1/2})$ is probably due to the fact that a sum of $N$ Gaussian random variable is of size $\mathcal{O}(N^{1/2})$. When we repeat the experiment with a random vector $\underline{c}$ with $\mathcal{O}(n^{-2})$ decay, i.e., {\tt c = randn(N)./(1:N).\^{}2} in Julia, the relative error $\|\underline{f}^{\textnormal{\tiny \text{exact}}} - \underline{f}^{\textnormal{\tiny \text{approx}}}\|_2/\|\underline{c}\|_2$ grows like $\mathcal{O}(N)$. More often than not, Fourier coefficients do decay as the coefficients are derived from expanding a smooth periodic function. 
\begin{figure} 
\begin{minipage}{.49\textwidth} 
\begin{overpic}[width=\textwidth,height=4cm]{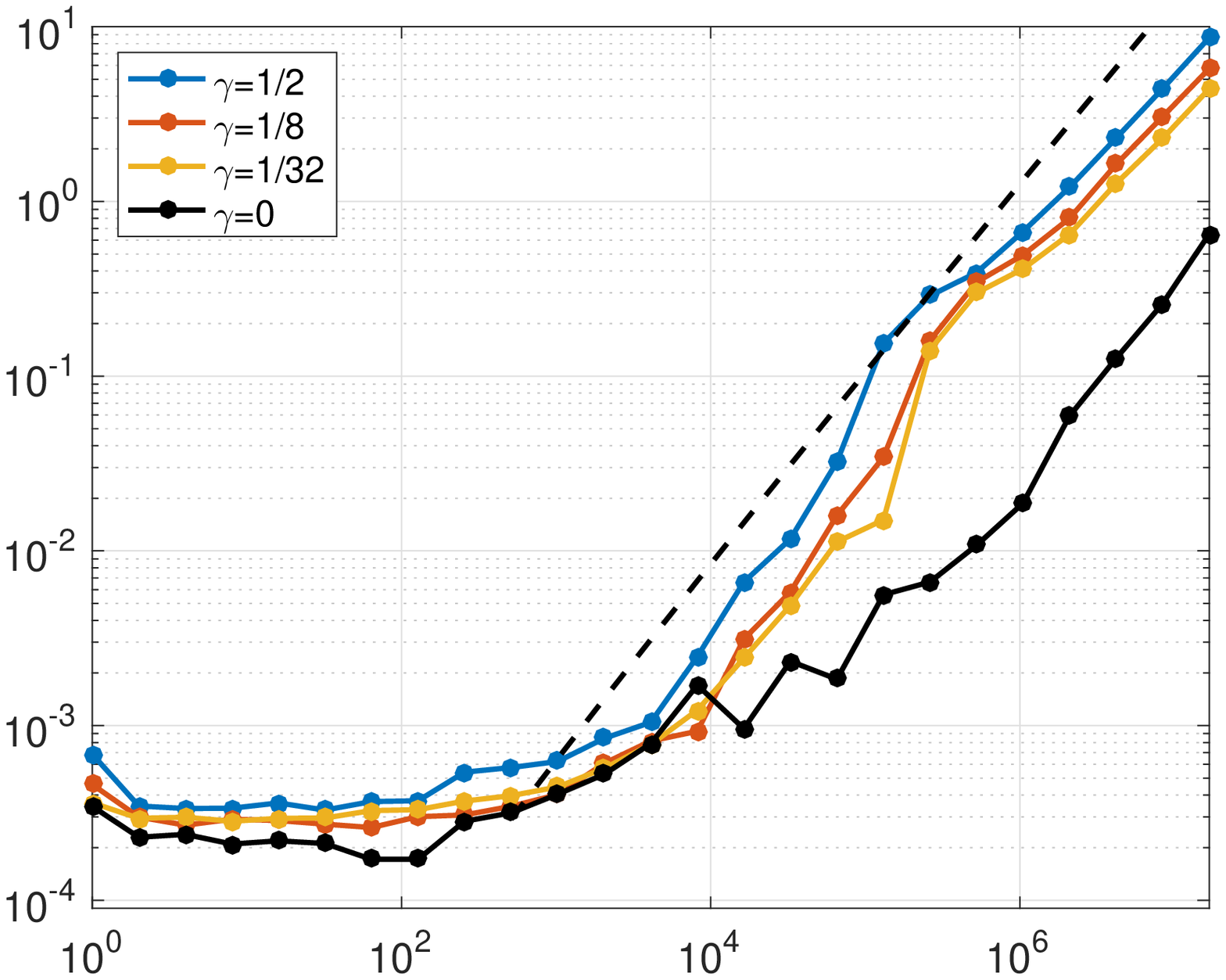}
\put(50,0) {\footnotesize{$N$}}
\put(3,15) {\footnotesize{\rotatebox{90}{Execution time}}}
\put(60,37) {\footnotesize{\rotatebox{45}{$\mathcal{O}(N\log N)$}}}
\end{overpic}  
\end{minipage} 
\begin{minipage}{.49\textwidth} 
\begin{overpic}[width=\textwidth,height=4cm]{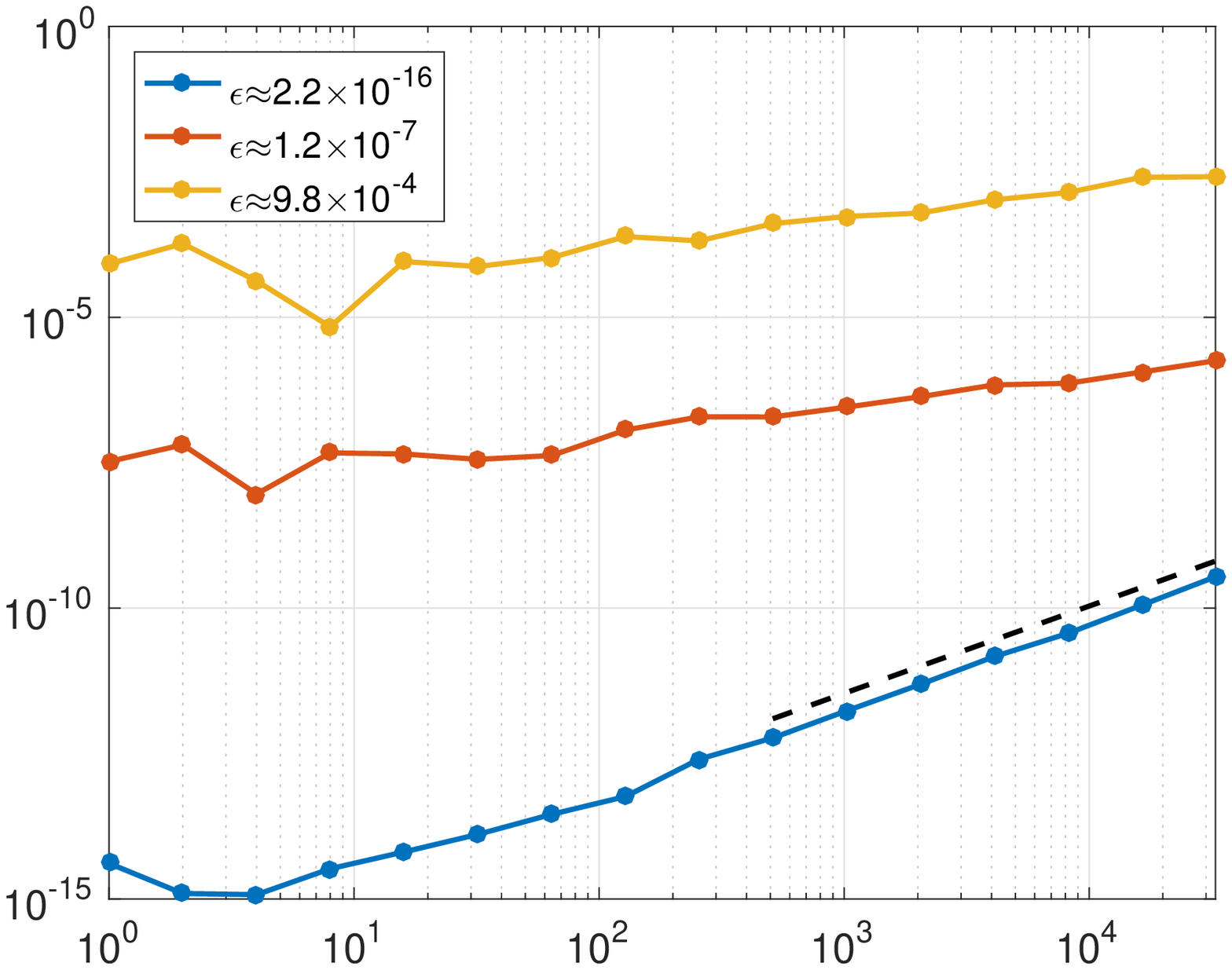}
\put(50,0) {\footnotesize{$N$}}
\put(65,22) {\footnotesize{\rotatebox{15}{$\mathcal{O}(N^{3/2})$}}}
\put(0,8) {\footnotesize{\rotatebox{90}{$\|\underline{f}^{\textnormal{\tiny \text{exact}}} - \underline{f}^{\textnormal{\tiny \text{approx}}}\|_2/\|\underline{c}\|_2$}}}
\end{overpic}  
\end{minipage} 
\caption{Left: The computational cost of our NUFFT-II transform with $\epsilon = 2.2\times 10^{-16}$ for $\gamma = 1/2$, $\gamma = 1/8$, $\gamma = 1/32$, and $\gamma = 0$ after planning (see~\eqref{eq:PerturbedSamples}). When $\gamma = 0$, our NUFFT-II is essentially one FFT.  Right: The accuracy of the NUFFT-II transform for working accuracies of $\epsilon \approx  2.2\times 10^{-16}$, $\epsilon \approx 1.2\times 10^{-7}$, and $\epsilon\approx 9.8\times 10^{-4}$. Here, the vector of Fourier coefficients $\underline{c}$ are realizations of standard Gaussian random variables, $\underline{f}^{\textnormal{\tiny \text{exact}}}=\tilde{F}_2\underline{c}$ is the vector computed using the naive $\mathcal{O}(N^2)$ algorithm in high precision (using {\tt BigFloat} in Julia), and $\underline{f}^{\textnormal{\tiny \text{approx}}}$ is the vector obtained from our NUFFT-II transform with the stated working accuracies.} 
\label{fig:NUFFT_gamma} 
\end{figure} 

\section{Other nonuniform fast Fourier transforms}\label{sec:TypeOne} 
Many other nonuniform discrete Fourier  transforms are related to the NUDFT-II including: (1) The NUDFT-I, (2) NUDFT-III, (3) inverse NUDFTs. We describe these transforms in this section. 

\subsection{The nonuniform fast Fourier transform of Type I} \label{sec:NUFFT-I}
The NUDFT-I transform computes the vector $\underline{f}$, given the vector $\underline{c}\in\mathbb{C}^{N\times 1}$ and frequencies $\underline{\omega}\in[0,N]$, such that
\[
f_j = \sum_{k=0}^{N-1} c_ke^{-2\pi i \tfrac{j}{N}\omega_k}, \qquad 0\leq j\leq N-1.
\]
It is equivalent to evaluating a generalized Fourier series at equispaced points and computing the matrix-vector product $\tilde{F}_1\underline{c}$, where 
$(\tilde{F}_1)_{jk} = e^{-2\pi i (j/N)\omega_k}$ for $0\leq j,k\leq N-1$.

For this transform, we immediately find that 
\[
(\tilde{F}_1)_{jk} = e^{-2\pi i \tfrac{j}{N}\omega_k}  = e^{-2\pi i \tfrac{\omega_k}{N}j} = (\tilde{F}_2)_{kj}\qquad 0\leq j,k\leq N-1,  
\]
where the frequencies $\omega_0/N,\ldots,\omega_{N-1}/N$ act as nonequispaced sampled in a NUDFT-II.  Therefore, we see that 
the NUDFT-I matrix is equivalent to a transposed NUDFT-II matrix. Since the transpose of a sum of matrices is equal to the sum of the individual 
terms transposed,~\eqref{eq:Fc} immediately leads to our NUFFT-I: 
\[
\tilde{F}_1\underline{c} =  \tilde{F}_2^\intercal\underline{c}\approx \sum_{r=0}^{K-1} D_{\underline{v}_r} F^\intercal_2 I_N(:,\underline{t}) D_{\underline{u}_r}\underline{c}  = \sum_{r=0}^{K-1} D_{\underline{v}_r} F^\intercal_2 I_N(:,\underline{t}) D_{\underline{u}_r}\underline{c}.
\]
Therefore, $\tilde{F}_1\underline{c}$, can be computed in $\mathcal{O}(N\log N\log(1/\epsilon)/\log\!\log(1/\epsilon))$ operations
using the relationship $\smash{F^\intercal\underline{c} = N\overline{F}^{-1}\underline{c}}$ and the inverse FFT. 


For implementations of this transform, see the {\tt chebfun.nufft} command in Chebfun~\cite{Driscoll_14_01} and
the {\tt nufft1} command in {\tt FastTransforms.jl}~\cite{FastTransforms}. 

\subsection{The nonuniform fast Fourier transform of Type III} \label{sec:NUFFT-III}
Let $x_0,\ldots, x_{N-1} \in [0,1)$ be samples and $\omega_0,\ldots, \omega_{N-1}\in[0,N)$ be frequencies. Suppose that we wish to compute the vector $\underline{f}$ 
in~\eqref{eq:NUDFT}, given $c_0,\ldots, c_{N-1}$. This is equivalent to computing the matrix-vector product $\tilde{F}_3\underline{c}$, where $(\tilde{F}_3)_{jk} = e^{-2\pi i x_j\omega_k}$.  From the properties of the exponential function, the sequence $s_0,\ldots,s_{N-1}$ in~\eqref{eq:s_vec}, and the sequence $t_0,\ldots,t_{N-1}$ in~\eqref{eq:t_vec}, we can write
\[
(\tilde{F}_3)_{jk} = e^{-2\pi i x_j \omega_k} = e^{-2\pi i(x_j-s_j/N)w_k}e^{-2\pi i \tfrac{s_j-t_j}{N} \omega_k}\underbrace{e^{-2\pi i \tfrac{t_j}{N} \omega_k}}_{=(\tilde{F_1})(\underline{t}, :)_{jk}}, \qquad 0\leq j,k\leq N-1.
\]
Applying the product above to every entry of $\tilde{F}_3$ leads to the following matrix decomposition:
\[
\tilde{F}_3 = A\circ B \circ \tilde{F}_1(\underline{t},:), \qquad A_{jk} = e^{-2\pi i(x_j-s_j/N)w_k}, \qquad B_{jk} = e^{-2\pi i \tfrac{s_j-t_j}{N} \omega_k},
\]
where $\tilde{F}_1(\underline{t},:)$ denotes the NUDFT-I matrix permuted by the sequence $t_0,\ldots,t_{N-1}$. 

Since $|N(x_j-s_j/N)|\leq 1/2$ for $0\leq j\leq N-1$ and $\omega_k/N\in[0,1)$ for $0\leq k\leq N-1$, we know from Theorem~\ref{thm:LowRankApproximation} that 
$A$ can be approximated by a rank $K$ matrix $A_K$ such that $\|A-A_K\|_{\max}\leq \epsilon$ and $K = \mathcal{O}(\log(1/\epsilon)/\log\!\log(1/\epsilon))$, where $0<\epsilon<1$ is a working precision. Moreover, the matrix $B$ is of rank at most $2$ since 
\[
B = (\mathbf{1}-(\underline{s} - \underline{t})/N) \mathbf{1}^\intercal + (\underline{s}-\underline{t})/N\exp(-2\pi i\underline{\omega}^\intercal),
\]
where $\mathbf{1}$ is the $N\times 1$ column vector of ones. 
Therefore, $A\circ B$ can be well-approximated by a rank $\mathcal{O}(K)$ matrix and hence, $\tilde{F}_3\underline{c}$ can be computed in $\mathcal{O}( KN\log N)$ operations. 

In double precision, the cost of this NUFFT-III is at most $32(=16\times 2)$ NUFFT-I's or, equivalently, $512(=32\times 16)$ FFTs of size $N$. These FFTs can all still be performed in parallel. 
In the case when the sequences $s_0,\ldots,s_{N-1}$ and $t_0,\ldots,t_{N-1}$ are the same, which often occurs (see~\eqref{eq:t_vec}), the matrix $B$ is the 
matrix of all ones. In this situation, $A\circ B = A$ and the cost of the NUFFT-III is reduced by a factor of $2$.  

This transform is available in the {\tt chebfun.nufft} command in Chebfun~\cite{Driscoll_14_01}.

\subsection{Inverse nonuniform fast Fourier transforms} 
In the NUFFT-I, -II, and -III, severely nonequispaced samples or noninteger frequencies were not a numerical issue and the parameter $\gamma$ in~\eqref{eq:PerturbedSamples} only mildly affected the computational cost of the transform. For the inverse transforms, nonuniform samples or frequencies are far more detrimental in terms of both accuracy and computational cost.  

The inverse NUDFT-II requires that the linear system $\tilde{F}_2\underline{c} = \underline{f}$ is solved for the vector $\underline{c}$, where $\tilde{F}_2$ is given in~\eqref{eq:NUDFT2}. Here, we will assume that the samples $x_0,\ldots,x_{N-1}$ are perturbed equispaced samples with $0\leq \gamma<1/2$ (see~\eqref{eq:PerturbedSamples}) to ensure that $\tilde{F}_2^{-1}$ exists.  Since we have a fast matrix-vector product for $\tilde{F}_2$ (see Section~\ref{sec:TypeTwo}), one naturally tries a variety of Krylov methods. After trying several of them, we advocate the following approach based 
on the conjugate gradient method (CG).

The matrix $\tilde{F}_2$ is not a positive definite matrix, i.e., it is not symmetric with positive eigenvalues, so the conjugate gradient method cannot be immediately applied. Instead, we use the conjugate gradient method on the normal equations: $\tilde{F}_2^*\tilde{F}_2\underline{c} = \tilde{F}_2^*\underline{f}$. 
By considering the $(j,k)$ entry of $\tilde{F}_2^*\tilde{F}_2$, we find that it only depends on the value of $j-k$: 
\[
(\tilde{F}_2^*\tilde{F}_2)_{jk} = \sum_{p=0}^{N-1} e^{2\pi i x_p (j-k)}, \qquad 0\leq j,k\leq N-1. 
\]
Hence, $\tilde{F}_2^*\tilde{F}_2$ is a Toeplitz matrix, i.e., a matrix with constant diagonal entries, as noted previously in~\cite{Dutt_93_01}.  Therefore, a matrix-vector product with $\tilde{F}_2^*\tilde{F}_2$ can be computed using a fast Toeplitz multiply, costing just one FFT and one inverse FFT of size $2N$~\cite[Sec.~4.7.7]{Golub_96_01}.\footnote{Note that the first column and row of $\tilde{F}_2^*\tilde{F}_2$ are the same due to symmetry and the first column of $\tilde{F}_2^*\tilde{F}_2$ can be obtained via the relation $\tilde{F}_2^*\tilde{F}_2\underline{e}_1$, where $\underline{e}_1$ is the first canonical vector.}

Let the number of conjugate gradient iterations be denoted by $R_{\textnormal{\tiny \textsc{cg}}}$. Since CG requires one matrix-vector product per iteration, the inverse transform costs the same as $2R_{\textnormal{\tiny \textsc{cg}}}$ FFTs of size $2N$, ignoring $\mathcal{O}(K)$ FFTs to compute $\tilde{F}_2\underline{f}$ and the calculation of the eigenvalues of a circulant matrix (see~\cite[Sec.~4.7.7]{Golub_96_01}). Therefore, this iterative method leads to an inverse NUFFT-II with a 
computational cost of $\mathcal{O}( R_{\textnormal{\tiny \textsc{cg}}}N\log N)$ operations, which is quasi-optimal provided that $R_{\textnormal{\tiny \textsc{cg}}}$ does not grow too quickly with $N$. 

Figure~\ref{fig:R_CG} shows that empirically $R_{\textnormal{\tiny \textsc{cg}}}$ is observed to be small and, perhaps, bounded with $N$ when $0<\gamma<1/4$.\footnote{The Kadec-1/4 theorem from the literature on the theory of frames~\cite{Kadec_64_01} (also see~\cite[Thm~3.1]{Austin_16_01}) makes us believe that $R_{\textnormal{\tiny \textsc{cg}}}$ remains bounded as $N\rightarrow\infty$ with $0\leq \gamma <1/4$, but grows with $N$ when $1/4\leq \gamma<1/2$.}  When the samples are uniformly sampled, $\tilde{F}_2 = F$ and $R_{\textnormal{\tiny \textsc{cg}}} = 1$. As the perturbation parameter, $\gamma$, is increased from $0$ to $1/2$, the 
condition number of $\tilde{F}_2$ --- and hence $R_{\textnormal{\tiny \textsc{cg}}}$ --- can increase without bound. For example, when $\gamma=1/2$, the samples may not be distinct and so $\tilde{F}_2^{-1}$ may not exist.    

\begin{figure} 
\centering
\begin{minipage}{.49\textwidth} 
\begin{overpic}[width=\textwidth,height=4cm]{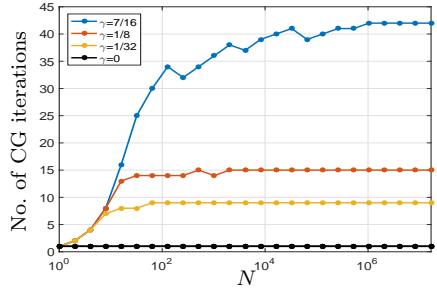}
\put(50,0) {\footnotesize{$N$}}
\put(3,11) {\footnotesize{\rotatebox{90}{No.~of CG iterations}}}
\end{overpic} 
\end{minipage} 
\caption{The number of conjugate gradient (CG) iterations required for the inverse NUFFT-II transform with a convergence tolerance of $\epsilon\approx 2.2\times 10^{-14}$ and $\gamma = 7/16$, $\gamma=1/8$, $\gamma=1/32$, and $\gamma=0$ (see~\eqref{eq:PerturbedSamples}). When $\gamma = 0$, the inverse NUFFT-II is the inverse FFT and only one CG iteration is required. The inverse NUFFT-II has a cost that is like an $\mathcal{O}(N\log N)$ algorithm for all practical $N$. This paper does not provide a bound on $R_{\textnormal{\tiny \textsc{cg}}}$.}
\label{fig:R_CG}
\end{figure} 

To fully understand the algorithmic complexity of our inverse NUFFT-II, we need to bound $R_{\textnormal{\tiny \textsc{cg}}}$. One can do this immediately if a bound on the condition 
number of $\tilde{F}_2$ is known. The recent theoretical work on the Lebesgue constant for trigonometric interpolation with nonequispaced points in~\cite{Austin_16_01,Austin_16_02} is potentially helpful for 
bounding the condition number of $\tilde{F}_2$; however, we have not been able to derive a bound in terms of $\gamma$ on this yet.

An analogous idea applies $\tilde{F}_1$ to derive an inverse NUFFT-I because
$\tilde{F}_1^{-1} = \tilde{F}^*(\tilde{F}_1\tilde{F}_1^*)^{-1}$ and $\tilde{F}_1\tilde{F}_1^*$ is a Toeplitz matrix (see, also,~\cite{Dutt_93_01}). 
The inverse transforms are implemented in the {\tt chebfun.inufft} command in Chebfun~\cite{Driscoll_14_01} and the {\tt inufft} commands in {\tt FastTransforms.jl}~\cite{FastTransforms}. 

\section{The two-dimensional nonuniform fast Fourier transform of type II}\label{sec:TwoDimensional}
Given an $m\times n$ matrix of Fourier coefficients $C\in\mathbb{C}^{m\times n}$ and nonuniform samples 
$(x_0,y_0),\ldots, (x_{N-1},y_{N-1})\in\mathbb{R}^2$, the two-dimensional NUDFT-II is the task of computing 
the following vector: 
\begin{equation}
f_j = \sum_{k_1=0}^{m-1}\sum_{k_2=0}^{n-1} C_{k_1k_2} e^{-2\pi i (k_1x_j + k_2y_j)}, \qquad 0\leq j\leq N-1. 
\label{eq:NUDFTII_2D}
\end{equation}
Naively, this requires $\mathcal{O}(Nmn)$ operations since there are $N$ sums with each sum contain $mn$ terms. 
Here, we describe an algorithm that requires only $\mathcal{O}(mn(\log(n) + \log(m)) + N)$ operations. 

It is helpful to start by reviewing the uniform two-dimensional FFT, which computes 
the vector $\underline{f}\in\mathbb{C}^{mn\times 1}$ (by default $N = mn$) such that 
\begin{equation}
f_j = \sum_{k_1=0}^{m-1}\sum_{k_2=0}^{n-1} C_{k_1k_2} e^{-2\pi i (k_1\lfloor j/m\rfloor/n + k_2{\rm mod}(j,m)/m)}, \qquad 0\leq j\leq mn-1. 
\label{eq:DFTII_2D}
\end{equation}
The samples in~\eqref{eq:DFTII_2D} lie on the $m\times n$ equispaced tensor grid $(s/m,t/n)$ for $0\leq s\leq m-1$ and $0\leq t\leq n-1$. 
In the Julia language, the vector $\underline{f}$ in~\eqref{eq:DFTII_2D} can be computed by the command {\tt fft(C)\![:]} in $\mathcal{O}( mn(\log m + \log n))$ operations. 

As in Section~\ref{sec:GeneralPosition_TypeII}, we first define a sequence $(s_0^x,s_0^y), \ldots, (s_{N-1}^x,s_{N-1}^y)\in\mathbb{N}^2$ such 
that $(s_j^x/n,s_j^y/m)$ is the closest point from an $m\times n$ equispaced grid to $(x_j,y_j)$ for $0\leq j\leq N-1$. By definition, 
we have 
\[
\left|x_j - \frac{s_j^x}{n}\right|\leq \frac{1}{2n},\qquad \left|y_j - \frac{s_j^y}{m}\right|\leq \frac{1}{2m}, \qquad 0\leq j \leq N-1. 
\]
Figure~\ref{fig:Assignment} illustrates this process when $m = n = 5$.

\begin{figure} 
\centering 
\begin{tikzpicture}
\draw[ultra thick, black] (0,0)--(5,0)--(5,5)--(0,5)--(0,0);
\foreach \x in {0,...,4}
     \foreach \y in {0,...,4}
          \draw[black,fill=black] (\x,\y) circle (.5ex); 
          
\foreach \x in {0,...,5} 
      \draw[black, thick] (\x,5) circle (.5ex); 
      
 \foreach \y in {0,...,5} 
      \draw[black, thick] (5,\y) circle (.5ex);     
      
\draw[color=gray,->,ultra thick] (3.2, 3.4) to [bend left = 40] (3,3);
\draw (3.2,3.4) node[cross=1ex,ultra thick] {};
\draw[color=gray,->,ultra thick] (2.6, 2.8) to [bend left = 40] (3,3);
\draw (2.6,2.8) node[cross=1ex,ultra thick] {};
\draw[color=gray,->,ultra thick] (.6, .2) to [bend left = 40] (1,0);
\draw (.6,.2) node[cross=1ex,ultra thick] {};
\draw[color=gray,->,ultra thick] (2.55, 4.8) to [bend left = 40] (3,5);
\draw (2.55,4.8) node[cross=1ex,ultra thick] {};
\draw[color=gray,->,ultra thick] (4.45, .8) to [bend left = 40] (4,1);
\draw (4.45,.8) node[cross=1ex,ultra thick] {};
\draw[color=gray,->,ultra thick] (.8,4.45) to [bend left = 40] (1,4);
\draw (.8,4.45) node[cross=1ex,ultra thick] {};
\draw[color=gray,->,ultra thick] (.3,2.45) to [bend left = 40] (0,2);
\draw (.3,2.45) node[cross=1ex,ultra thick] {};
\draw[color=gray,->,ultra thick] (3.4,2.9) to [bend left = 40] (3,3);
\draw (3.4,2.9) node[cross=1ex,ultra thick] {};
\draw[color=gray,->,ultra thick] (1.55,2) to [bend left = 40] (2,2);
\draw (1.55,2) node[cross=1ex,ultra thick] {};
\draw[color=gray,->,ultra thick] (3.55,.4) to [bend left = 40] (4,0);
\draw (3.55,.4) node[cross=1ex,ultra thick] {};
\draw[color=gray,->,ultra thick] (4.6,3.25) to [bend left = 40] (5,3);
\draw (4.6,3.25) node[cross=1ex,ultra thick] {};
\end{tikzpicture} 
\caption{An illustration of how nonuniform samples in $[0,1]\times [0,1]$ are assigned to their nearest $m\times n$ equispaced grid point. The sequence 
$(s_0^x,s_0^y),\ldots,(s_{N-1}^x,s_{N-1}^y)\in\mathbb{N}^2$ is defined so that $(s_j^x/n,s_j^y/m)$ is the closest $m\times n$ equispaced grid point to $(x_j,y_j)$ for $0\leq j\leq N-1$. 
If $(x_j,y_j)$ is assigned to one of the grid points denoted by one of the unfilled black circles, i.e., either $s_j^x = n$ or $s_j^y=m$ or both, then the sample 
is reassigned using the periodicity of the complex exponential function (see~\eqref{eq:2Dt_vec}).}
\label{fig:Assignment}
\end{figure}
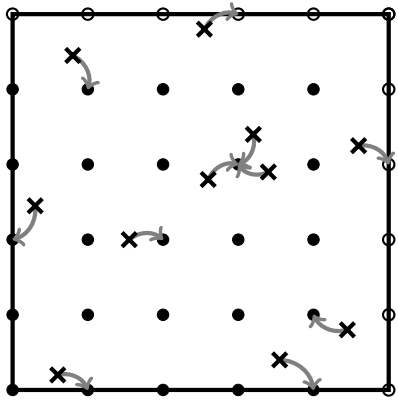 

If $s_j^x=n$ or $s_j^y=m$ for any $0\leq j\leq N-1$, the equispaced sample $(s_j^x/n,s_j^y/m)$ does not appear in the two-dimensional FFT in~\eqref{eq:DFTII_2D}. 
Analogous to the sequence $t_0,\ldots,t_{N-1}$ in~\eqref{eq:t_vec}, we reassign the sample $(x_j,y_j)$ using the periodicity of the complex exponential function. 
That is, we define a new sequence $(t_0^x,t_0^y), \ldots, (t_{N-1}^x,t_{N-1}^y)$ such that 
\begin{equation}
t_j^x = \begin{cases}s_j^x,& s_j^x \neq n,\\\ 0,& \text{otherwise}, \end{cases} \qquad t_j^y = \begin{cases}s_j^y,& s_j^y \neq m,\\ 0,& \text{otherwise},  \end{cases} \qquad 0\leq j\leq N-1. 
\label{eq:2Dt_vec}
\end{equation}

Using these two sequences, we can rewrite~\eqref{eq:NUDFTII_2D} as 
\begin{equation}
f_j = \sum_{k_1=0}^{m-1}\sum_{k_2=0}^{n-1} C_{k_1k_2} \underbrace{e^{-2\pi i k_1(x_j-s_j^x/n)}}_{=A^x_{jk_1}}\underbrace{e^{-2\pi i k_2(y_j-s_j^y/m)}}_{=A^y_{jk_2}}e^{-2\pi i (k_1t_j^x/n + k_2t_j^y/m)}. 
\label{eq:rewrite} 
\end{equation} 
Here, $A^x\in\mathbb{C}^{N\times n}$ and $A^y\in\mathbb{C}^{N\times m}$ are matrices that can be well-approximated by low rank matrix because $n|x_j-s_j^x/n|\leq 1/2$, $k_1/n\in[0,1]$, $m|y_j-s_j^y/m|\leq 1/2$, and $k_2/m\in[0,1]$. Using the ideas in Section~\ref{sec:ChebyshevLowRank}, we can construct vectors such that 
$A^x\approx \underline{u}_0^x(\underline{v}_0^x)^\intercal + \cdots + \underline{u}_{K_1-1}^x(\underline{v}_{K_1-1}^x)^\intercal$ and 
$A^y\approx \underline{u}_0^y(\underline{v}_0^y)^\intercal + \cdots + \underline{u}_{K_2-1}^y(\underline{v}_{K_2-1}^y)^\intercal$. In double 
precision, $K_1$ and $K_2$ are both at most $16$ (see Table~\ref{tab:OptimalK}). Moreover, we note that $e^{-2\pi i (k_1t_j^x/n + k_2t_j^y/m)}$ 
is closely related to the complex exponential function in the uniform two-dimensional DFT in~\eqref{eq:DFTII_2D}. 

Substituting the low rank representations for $A^x$ and $A^y$ into~\eqref{eq:rewrite}, absorbing the sums over $k_1$ and $k_2$ into matrix-matrix products, 
and using the fact that $(\underline{u}\underline{v}^\intercal)\circ C =D_{\underline{u}} CD_{\underline{v}}$, we find that~\eqref{eq:NUDFTII_2D} can be 
expressed as
\begin{equation}
f_j = \sum_{r_1=0}^{K_1-1}\sum_{r_2=0}^{K_2-1} (\underline{u}_{r_1}^x)_j(\underline{u}_{r_2}^y)_j \left[{\rm vec}\!\left(F_m D_{\underline{v}_{r_2}^y}CD_{\underline{v}_{r_1}^x}F_n^\intercal\right)\right]_{mt_j^x+t_j^y}, \qquad 0\leq j\leq N-1. 
\label{eq:NUFFTII_2D}
\end{equation} 
Here, $[{\rm vec}(A)]_{mt_j^x+t_j^y}$ denotes the $mt_j^x+t_j^y$ entry of the vector ${\rm vec}(A)$. 

The sum in~\eqref{eq:NUFFTII_2D} leads to a quasi-optimal complexity transform for the two-dimensional NUFFT-II. There are $K_1K_2$ terms in~\eqref{eq:NUFFTII_2D} each requiring an $m\times n$ two-dimensional FFT with a diagonally-scaled coefficient matrix $C$.  Moreover, since each term is adding together $N\times 1$ vectors, the total cost of the transform is $\mathcal{O}(K_1K_2mn(\log m + \log n)+N)$ operations. With an explicit dependence on the working accuracy $0<\epsilon<1$, this becomes $\mathcal{O}(mn(\log m + \log n)\log(1/\epsilon)^2/\log\!\log(1/\epsilon)^2+N )$ operations.

The cost of the transform can be moderately reduced by noting that $\smash{CD_{\underline{v}_{r_1}^x}F_n^\intercal}$ in~\eqref{eq:NUFFTII_2D} does not 
depend on $r_2$ and can be computed just once for each $0\leq r_1\leq K_1-1$.  This reduces the cost to $\mathcal{O}(K_1K_2mn\log m + K_1mn\log n+N)$ operations. 

For implementations of this two-dimensional transform, see the {\tt chebfun.nufft2} command in Chebfun~\cite{Driscoll_14_01} and
the {\tt nufft2d} command in {\tt FastTransforms.jl}~\cite{FastTransforms}. There are two other types of two-dimensional NUFFTs, which can 
be implemented with similar ideas as well as multidimensional NUFFTs. 

\section*{Acknowledgements} 
We thank the Ministerio de Econom\'{i}a y Competitividad (reference BES-2013-064743) for providing the financial support for the first author to visit Cornell University for three months. 
The work for this paper began during the summer of 2016 and we are grateful to Amparo Gil and Javier Segura for helping to organize the research visit.
We thank Anthony Austin for discussing with us the condition number of the NUDFT-II matrix and
Kuan Xu for providing extremely useful feedback on an earlier version of the manuscript. We are also grateful to Mike O'Neil and Heather Wilber for reading the manuscript. 

\appendix 
\section{Constructing a low rank matrix approximation via a bivariate Chebyshev expansion}\label{sec:Appendix} 
In Section~\ref{sec:TypeII_perturbed} we require a low rank approximation for the matrix $A$ in~\eqref{eq:A}.  We first note that we can consider 
$A$ as the matrix obtained by sampling the bivariate function $(x,y)\mapsto e^{-i x y}$ on the domain $[-\gamma, \gamma]\times [0,2\pi]$. 
If we construct a bivariate polynomial approximation $q(x,y)$ of degree $K-1$ in both the $x$- and $y$-variable to $e^{-i x y}$ on $[-\gamma, \gamma]\times [0,2\pi]$,
then 
\[
A = e^{-2\pi i (\underline{x}-\underline{e})\underline{\omega}^\intercal} \approx q\!\left(N(\underline{x}-\underline{e}),\tfrac{\underline{\omega}^\intercal}{N}\right) = A_K
\]
is a rank $K$ approximation to $A$~\cite[Sec.~3.1]{Townsend_14_01}.  We construct the polynomial $q(x,y)$ by a truncated bivariate Chebyshev expansion for $e^{-i x y}$.
\begin{lemma} 
Let $0<\epsilon<1$ be a working precision and $\gamma >0$.  The following holds: 
\[
\sup_{(x,y)\in [-\gamma,\gamma]\times [0,2\pi]}\left| e^{-i x y} - \sum_{r=0}^{K-1}\!{}^{'} \!\sum_{p=0}^{K-1}\!{}^{'}  a_{pr} e^{-i\pi x}T_p(\tfrac{x}{\gamma})T_r(\tfrac{y}{\pi}-1)\right|\leq \epsilon,
\]
where $T_p$ is the degree $p$ Chebyshev polynomial, the $a_{pq}$ coefficients are given in~\eqref{eq:ChebyshevCoefficients}, and the primes on the summands indicate that the first term should be halved. Here, the integer $K$ satisfies: 
\[
K = \max\left\{3, \Big\lceil 5\gamma e^{W\!\left(\log(140/\epsilon)/(5\gamma)\right)}\Big\rceil\right\} = \mathcal{O}\left(\frac{\log(1/\epsilon)}{\log\!\log(1/\epsilon)}\right), \qquad \epsilon\rightarrow 0,
\]
where $W(x)$ is the Lambert W function~\cite[(4.13.1)]{NISTHandbook}. 
\label{lem:ChebyshevExpansion}
\end{lemma}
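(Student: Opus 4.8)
The plan is to rescale to the reference square $[-1,1]^2$, recognize the $a_{pr}$ as the exact bivariate Chebyshev coefficients of a single entire function, and then bound the truncation tail by collapsing it into the tail of a scalar exponential series.

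First I would substitute $s=x/\gamma\in[-1,1]$ and $t=y/\pi-1\in[-1,1]$, so that
\[
e^{-ixy}=e^{-i\pi\gamma s(t+1)}=e^{-i\pi x}\,e^{-i\pi\gamma st}.
\]
Because $|e^{-i\pi x}|=1$ and $|T_p(s)|,|T_r(t)|\le 1$ on $[-1,1]$, the supremum in the statement equals $\sup|g-P_K|$, where $g(s,t):=e^{-i\pi\gamma st}$ and $P_K$ is the truncation to $0\le p,r\le K-1$ of its bivariate Chebyshev series; hence it is at most $\sum_{\max(p,r)\ge K}{}^{'}{}^{'}|a_{pr}|$. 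The next step is to confirm that the stated $a_{pr}$ are the Chebyshev coefficients of $g$. Writing $s=\cos\phi$, $t=\cos\theta$ and $\cos\phi\cos\theta=\tfrac12(\cos(\phi-\theta)+\cos(\phi+\theta))$, I would apply the Jacobi--Anger expansion to each of $e^{-i(\pi\gamma/2)\cos(\phi\mp\theta)}$; the resulting double orthogonality integral selects indices with $m+n=\pm p$ and $n-m=\pm r$, which forces $p\equiv r\pmod 2$ and, after collecting the four sign combinations and using $J_{-\nu}=(-1)^{\nu}J_{\nu}$, produces exactly $4i^{r}J_{(p+r)/2}(-\gamma\pi/2)J_{(r-p)/2}(-\gamma\pi/2)$. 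Since $g$ is entire its series converges uniformly, so the truncation tail is a legitimate bound.

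The heart of the argument is to estimate this tail. Using the elementary bound $|J_\nu(z)|\le|z/2|^\nu/\nu!$ for real $z$ \cite[(10.14.4)]{NISTHandbook} with $\beta:=\gamma\pi/4$, and writing $\mu=(p+r)/2$, $\nu=|p-r|/2$ (so that $\mu+\nu=\max(p,r)$), I get $|a_{pr}|\le 4\beta^{\mu}\beta^{\nu}/(\mu!\,\nu!)$. The key simplification is that $\mu+\nu=\max(p,r)$ and the correspondence $(p,r)\leftrightarrow(\mu,\nu)$ is at most two-to-one; bounding the multiplicity by $2$ and extending the sum to all $\mu,\nu\ge0$, the binomial theorem collapses the inner sum:
\[
\sum_{\max(p,r)\ge K}\!\!|a_{pr}|\;\le\;8\sum_{n\ge K}\;\sum_{\mu+\nu=n}\frac{\beta^{\mu+\nu}}{\mu!\,\nu!}\;=\;8\sum_{n\ge K}\frac{(2\beta)^{n}}{n!}\;=\;8\sum_{n\ge K}\frac{(\gamma\pi/2)^{n}}{n!}.
\]
Since $\gamma\pi/2\le\pi/4<1$, I would bound the exponential tail by $2(\gamma\pi/2)^{K}/K!$ and then apply $K!\ge(K/e)^{K}$, obtaining
\[
\sum_{\max(p,r)\ge K}\!\!|a_{pr}|\;\le\;16\,\frac{(\gamma\pi/2)^{K}}{K!}\;\le\;16\left(\frac{\gamma\pi e}{2K}\right)^{\!K}\;\le\;140\left(\frac{5\gamma}{K}\right)^{\!K},
\]
the last inequality holding because $\pi e/2<5$.

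Finally I would impose $140(5\gamma/K)^{K}=\epsilon$; taking logarithms gives $K\log(K/5\gamma)=\log(140/\epsilon)$, which inverts through the Lambert-$W$ function to the stated formula, while the ceiling and the enforced minimum $K\ge 3$ only decrease $(5\gamma/K)^{K}$ (since $K>5\gamma>5\gamma/e$ makes this quantity decreasing in $K$), keeping the bound $\le\epsilon$. The asymptotic $K=\mathcal{O}(\log(1/\epsilon)/\log\!\log(1/\epsilon))$ then follows from $W(z)\sim\log z-\log\!\log z$. I expect the main obstacle to be the exact evaluation of the Chebyshev coefficients: the bookkeeping of the four sign combinations, the $i^{r}$ prefactor, and the induced parity restriction is the most error-prone part. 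Once the coefficient formula and the uniform bound $|a_{pr}|\le 4\beta^{\mu}\beta^{\nu}/(\mu!\,\nu!)$ are in hand, the binomial collapse to a scalar exponential tail renders the remaining estimates routine.
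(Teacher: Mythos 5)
Your proof is correct, and it shares the paper's overall skeleton — truncate the bivariate Chebyshev expansion whose coefficients are products of Bessel functions, bound the truncation error by the coefficient tail, and invert a bound of the form $140(5\gamma/K)^K\le\epsilon$ through the Lambert-$W$ function — but both key steps are carried out by genuinely different arguments. Where the paper simply cites the expansion and the formula for $a_{pr}$ from \cite[Lem.~A.3]{Townsend_14_01}, you rederive the coefficients via Jacobi--Anger; this is the only sketched (and, strictly, unnecessary) part of your write-up, since the lemma takes the $a_{pr}$ as given. Where the paper bounds one Bessel factor by $1$ and the other by a Stirling-type estimate, then sums along diagonals $p+r=s$ with multiplicity $s+1$ and a geometric series to reach $140(5\gamma/(K-1))^{K-1}$, you keep both factorials in $|a_{pr}|\le 4\beta^{\mu+\nu}/(\mu!\,\nu!)$ with $\beta=\gamma\pi/4$ and $\mu+\nu=\max(p,r)$, and collapse the double sum via the binomial identity $\sum_{\mu+\nu=n}1/(\mu!\,\nu!)=2^n/n!$ into a scalar exponential tail. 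This buys two genuine improvements over the paper's execution: (i) your estimate covers the correct index set $\max(p,r)\ge K$ (the full complement of the truncation square), whereas the paper's displayed tail $\sum_{p\ge K}\sum_{r\ge K}|a_{pr}|$, and its diagonal count which only sees $p+r\ge 2K$, literally omit the L-shaped region where exactly one index exceeds $K-1$; and (ii) your bound $140(5\gamma/K)^K$ inverts to exactly the stated $K=\lceil 5\gamma e^{W(\log(140/\epsilon)/(5\gamma))}\rceil$, eliminating the off-by-one slop between the paper's $(K-1)$-bound and its own stated formula. One caveat you share with the paper: just as its proof needs $e\gamma\pi\le 5$, yours needs $\gamma\pi/2\le 1$ (you invoke $\gamma\le 1/2$), even though the lemma as stated assumes only $\gamma>0$; this is harmless in context, since the paper never applies the lemma beyond $\gamma\le 1/2$. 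Finally, your monotonicity remark — that $(5\gamma/K)^K$ is decreasing for $K>5\gamma/e$, so the ceiling and the floor $K\ge 3$ preserve the bound — is correct and makes explicit a point the paper leaves implicit.
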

\begin{proof} 
A bivariate Chebyshev expansion of $e^{-ixy}$ on $(x,y)\in [-\gamma,\gamma]\times [0,2\pi]$ is given by~\cite[Lem.~A.3]{Townsend_14_01}
\[
e^{-i x y} = \sum_{p=0}^\infty\!{}^{'} \sum_{r=0}^\infty\!{}^{'}  a_{pr} e^{i\pi x}T_p(\tfrac{x}{\gamma})T_r(\tfrac{y}{\pi}-1),  \quad (x,y)\in [-\gamma,\gamma]\times [0,2\pi],
\]
where $T_p$ is the degree $p$ Chebyshev polynomial and the primes on the summands indicate that the first term should be halved. Moreover, the expansion coefficients are given by~\cite[Lem.~A.3]{Townsend_14_01} 
\begin{equation}
a_{pr} = \begin{cases} 4i^rJ_{(p+r)/2}(-\gamma\pi/2)J_{(r-p)/2}(-\gamma\pi/2), & {\rm mod}(|p-r|,2)=0,\\ 0,& \text{otherwise}. \end{cases}
\label{eq:ChebyshevCoefficients}
\end{equation} 
Pick $K\geq 1$ to be an integer. Then, by the triangle inequality, $|T_p(x)|\leq 1$ for $x\in[-1,1]$, and $|e^{i\theta}|=1$, we have
\[
\sup_{(x,y)\in [-\gamma,\gamma]\times [0,2\pi]}\left| e^{-i x y} - \sum_{p=0}^{K-1}\!{}^{'} \sum_{r=0}^{K-1}\!{}^{'}  a_{pr} e^{i\pi x}T_p(\tfrac{x}{\gamma})T_r(\tfrac{y}{\pi}-1)\right|\leq \sum_{p=K}^{\infty} \sum_{r=K}^{\infty} |a_{pr}|.
\]
Using~\cite[(10.14.1) and (10.14.7)]{NISTHandbook}, we find that 
\[
|a_{pr}|\leq 4\left(\frac{e\gamma\pi}{p+r}\right)^{(p+r)/2}, \qquad \max(p,r)\geq 1.
\]
Therefore, by setting $s = p+r$, we can bound the error as
\[
\sum_{p=K}^{\infty} \sum_{r=K}^{\infty} |a_{pr}| \leq 4 \sum_{s=0}^{\infty}(s+1)\left(\frac{e\gamma\pi}{s+2K}\right)^{(s+2K)/2}\leq 4(e\gamma\pi)\sum_{s=0}^{\infty}\left(\frac{e\gamma\pi}{2K}\right)^{(s+2K-2)/2}. 
\]
Assuming $K \geq 3$, we find that $\sum_{s=0}^\infty \rho^{s/2} = 1/(1-\rho) \leq 7$ with $\rho = (e\gamma\pi)/(2K)$. Hence, we have
\[
\sum_{p=K}^{\infty} \sum_{r=K}^{\infty} |a_{pr}|\leq 28(e\gamma\pi)\left(\frac{e\gamma\pi}{2K}\right)^{K-1} \leq 140 \left(\frac{5\gamma}{K-1}\right)^{K-1},
\]
where the last inequality used $e\gamma\pi\leq 5$, $e\pi/2\leq 5$, and $K\geq K-1$. By solving for $K\geq 3$ such that $\sum_{p=K}^{\infty} \sum_{r=K}^{\infty} |a_{pr}|\leq \epsilon$, 
we find that we can take $K$ to be 
\[
K = \max\left\{3, \Big\lceil 5\gamma e^{W\!\left(\log(140/\epsilon)/(5\gamma)\right)}\Big\rceil\right\} = \mathcal{O}\left(\frac{\log(1/\epsilon)}{\log\!\log(1/\epsilon)}\right)\!, \qquad \epsilon\rightarrow 0,
\]
where $W(x)$ is the Lambert W function. The asymptotic approximation for the lower bound on $K$ as $\epsilon\rightarrow 0$ is derived from the asymptotic expansion for $W(x)$ as $x\rightarrow \infty$~\cite[(4.13.10)]{NISTHandbook}. 
\end{proof} 

We now evaluate the truncated Chebyshev expansion constructed in Lemma~\ref{lem:ChebyshevExpansion} to derive a rank $K$ approximation to the 
matrix $A$ in~\eqref{eq:A}. We make the additional restriction that $\gamma\leq 1/2$ in the statement of the theorem below because we do not construct low rank approximations to $A$ when $\gamma>1/2$ (see Section~\ref{sec:GeneralPosition_TypeII}). 
\begin{theorem} 
Let $N\geq 1$ be an integer, $0<\epsilon<1$, and $x_0,\ldots, x_{N-1}$ samples such that~\eqref{eq:PerturbedSamples} holds with $0<\gamma\leq 1/2$. 
Consider the $N\times N$ matrix 
\[
A_{jk} = e^{-2\pi i (x_j-j/N)\omega_k}, \qquad 0\leq j,k\leq N-1,
\]
where $\omega_0,\ldots,\omega_{N-1}\in[0,N]$. 
Then, there exists a rank $K$ matrix $A_K$ such that $\|A-A_K\|_{\max}\leq \epsilon$, where 
\[
K = \max\left\{3, \Big\lceil 5\gamma e^{W\!\left(\log(140/\epsilon)/(5\gamma)\right)}\Big\rceil\right\} = \mathcal{O}\left(\frac{\log(1/\epsilon)}{\log\!\log(1/\epsilon)}\right)\!,\qquad \epsilon\rightarrow0
\] 
and $\|A\|_{\max}$ is the absolute maximum entry of $A$. 
\label{thm:LowRankApproximation}
\end{theorem}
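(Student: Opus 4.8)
The plan is to obtain the theorem as an essentially immediate corollary of Lemma~\ref{lem:ChebyshevExpansion}: I would recognize $A$ as a sampling of the bivariate function $(x,y)\mapsto e^{-ixy}$ on $[-\gamma,\gamma]\times[0,2\pi]$ and then transfer the uniform approximation bound supplied by that lemma into the matrix max-norm. The only genuine work is the bookkeeping of two affine changes of variable, the verification that the sampled abscissae land inside the domain on which the lemma holds, and the observation that sampling a sum of separable terms produces a low rank matrix.

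First I would make the sampling correspondence precise. Introduce the scalars $\xi_j = N(x_j - j/N)$ and $\eta_k = 2\pi\omega_k/N$. The perturbation hypothesis~\eqref{eq:PerturbedSamples} gives $|\xi_j| = |Nx_j - j| \leq \gamma$, so $\xi_j \in [-\gamma,\gamma]$, while the assumption $\omega_k \in [0,N]$ gives $\eta_k \in [0,2\pi]$. A one-line calculation shows $\xi_j\eta_k = 2\pi(x_j - j/N)\omega_k$, hence $A_{jk} = e^{-2\pi i(x_j-j/N)\omega_k} = e^{-i\xi_j\eta_k}$. Thus $A$ is exactly the matrix whose $(j,k)$ entry is the value of $e^{-ixy}$ at the grid point $(\xi_j,\eta_k)\in[-\gamma,\gamma]\times[0,2\pi]$.

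Next I would define $A_K$ by sampling, at the same grid, the truncated bivariate Chebyshev expansion $q(x,y)$ furnished by Lemma~\ref{lem:ChebyshevExpansion}. Writing that expansion as $q(x,y)=\sum_{r=0}^{K-1}\!{}^{'} g_r(x)\,T_r(\tfrac{y}{\pi}-1)$ with $g_r(x)=\sum_{p=0}^{K-1}\!{}^{'} a_{pr}\,e^{-i\pi x}T_p(\tfrac{x}{\gamma})$, I set $(A_K)_{jk}=q(\xi_j,\eta_k)$. The structural point is that each summand of $q$ is separable in $x$ and $y$, so defining $(\underline{u}_r)_j = g_r(\xi_j)$ and $(\underline{v}_r)_k = T_r(\tfrac{\eta_k}{\pi}-1)$ yields $A_K = \sum_{r=0}^{K-1}\!{}^{'}\underline{u}_r\underline{v}_r^\intercal$, a sum of $K$ rank-one matrices (the $r=0$ term halved but still rank one, since $T_0\equiv 1$). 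Consequently $\operatorname{rank}(A_K)\leq K$, and this $A_K$ is precisely the matrix written in~\eqref{eq:ChebyshevLowRank}.

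Finally, since every entry difference $A_{jk}-(A_K)_{jk} = e^{-i\xi_j\eta_k}-q(\xi_j,\eta_k)$ is the value of $e^{-ixy}-q(x,y)$ at a point of $[-\gamma,\gamma]\times[0,2\pi]$, the max-norm bound follows at once: $\|A-A_K\|_{\max}=\max_{j,k}|e^{-i\xi_j\eta_k}-q(\xi_j,\eta_k)|\leq \sup_{(x,y)}|e^{-ixy}-q(x,y)|\leq\epsilon$, where the last inequality is Lemma~\ref{lem:ChebyshevExpansion} and the value of $K$ is inherited from it verbatim. I expect no real analytic obstacle here — the lemma already absorbs the Bessel-coefficient tail estimate and the Lambert-$W$ inversion that determine $K$ — so the only care needed is the change-of-variable bookkeeping and the domain verification above. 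The restriction $\gamma\leq 1/2$ in the statement plays no role in the argument; it is imposed only because the case $\gamma>1/2$ is handled instead by the reduction to a nearest grid of size $N+1$ in Section~\ref{sec:GeneralPosition_TypeII}.
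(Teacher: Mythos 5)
Your proposal is correct and follows essentially the same route as the paper's own proof: both apply Lemma~\ref{lem:ChebyshevExpansion} entrywise after the change of variables $N(x_j-j/N)\in[-\gamma,\gamma]$, $2\pi\omega_k/N\in[0,2\pi]$, and both obtain rank at most $K$ (rather than $K^2$) by grouping the inner sum over $p$ into a single vector for each $r$, which is exactly the regrouping written out in the paper's display following~\eqref{eq:DoubleSum}. Your closing remark about the role of $\gamma\leq 1/2$ also matches the paper's stated reason for that restriction.
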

\begin{proof} 
Let $\underline{x} = \left(x_0, x_1, \ldots, x_{N-1}\right)^\intercal$, $\underline{e} = \left(0/N, 1/N, \ldots, (N-1)/N\right)^\intercal$ and $\underline{\smash{\omega}} = \left(\omega_0, \omega_1, \ldots, \omega_{N-1}\right)^\intercal$. 
Then, $A = \exp(-2\pi i (\underline{x}-\underline{e})\underline{\smash{\omega}}^\intercal )$, where the exponential function is applied entry-by-entry to its matrix input. Since the entries
in $N\underline{x}$ are in $ [-\gamma,\gamma]$ and the entries of $2\pi\underline{\smash{\omega}}/N$ are in $[0,2\pi]$, we can apply 
Lemma~\ref{lem:ChebyshevExpansion} to each entry of $A$. We conclude that for 
$K = \max\{3, \lceil 5\gamma e^{W\!\left(\log(140/\epsilon)/(5\gamma)\right)}\rceil\}$ we have
\begin{equation}
\left\| A - A_K\right\|_{\max} \leq \epsilon, \quad A_K = \sum_{p=0}^{K-1}\!{}^{'} \!\sum_{r=0}^{K-1}\!{}^{'}  a_{pr} \left(\exp\left(-i\pi N(\underline{x}-\underline{e})\right)\circ T_p(\tfrac{N(\underline{x}-\underline{e})}{\gamma})\right)T_r(\tfrac{\underline{2\smash{\omega}}^\intercal}{N}-\mathbf{1}^\intercal),
\label{eq:DoubleSum}
\end{equation} 
where $T_p(x)$ is the degree $p$ Chebyshev polynomial, the coefficients $a_{pr}$ are given in~\eqref{eq:ChebyshevCoefficients}, $\mathbf{1}$ is the $N\times 1$ column vector of ones, and the prime on the summands indicate that the first term is halved.  

Each term in the double sum in~\eqref{eq:DoubleSum} is a rank-1 term so it may look like $A_K$ is of rank at most $K^2$; however, 
by appropriately grouping the terms as follows: 
\[
A_K = \sum_{r=0}^{K-1}\!{}^{'} \underbrace{\left(\sum_{p=0}^{K-1}\!{}^{'} \!a_{pr} \left(\exp\left(-i\pi N(\underline{x}-\underline{e})\right)\circ T_p(\tfrac{N(\underline{x}-\underline{e})}{\gamma})\right)\right)T_r(\tfrac{2\underline{\smash{\omega}}^\intercal}{N}-\mathbf{1}^\intercal)}_{\text{A rank-1 matrix}},
\]
we conclude that $A_K$ is a matrix of rank at most $K$, as required.  The asymptotic order of $K$ given in the statement of the 
theorem comes from the asymptotic expansion of $W(x)$ as $x\rightarrow\infty$~\cite[(4.13.10)]{NISTHandbook}.
\end{proof} 

\end{document}